\theoremstyle{plain}
\newtheorem{theorem}{Theorem}[section]
\newtheorem{corollary}{Corollary}[section]
\newtheorem{proposition}{Proposition}[section]
\newtheorem{lemma}{Lemma}[section]
\theoremstyle{definition}
\theoremstyle{remark}
\newtheorem{remark}{Remark}[section]
\newtheorem{conjecture}{Conjecture}
\newcommand{\field}[1]{\mathbb{#1}}
\newcommand{\D}{\field{D}}
\newcommand{\R}{\field{R}}
\newcommand{\N}{\field{N}}
\newcommand{\C}{\field{C}}
\renewcommand{\P}{\field{P}}
\newcommand{\T}{\field{T}}
\renewcommand{\Re}{\mathop{\rm Re}}
\def\XXint#1#2#3{{\setbox0=\hbox{$#1{#2#3}{\int}$}
\vcenter{\hbox{$#2#3$}}\kern-.5\wd0}}
\author[R. Orive]{Ram\'on Orive}
\address{Universidad de la Laguna e IMAULL,  Avenida Astrof\'isico Francisco S\'anchez, s/n, Departamento de An\'alisis Matem\'atico \\ 38206 San Crist\'obal de La Laguna, Santa Cruz de Tenerife,  Spain} \email{rorive@ull.edu.es}
\author[J. S\'anchez-Lara]{Joaqu\'in S\'anchez-Lara}
\address{Departamento de Matem\'atica Aplicada, Universidad de Granada,  Granada,  Spain} \email{jslara@ugr.es}
\author[D. Seco]{Daniel Seco}
\address{Universidad de la Laguna e IMAULL,  Avenida Astrof\'isico Francisco S\'anchez, s/n, Departamento de An\'alisis Matem\'atico \\ 38206 San Crist\'obal de La Laguna, Santa Cruz de Tenerife,  Spain} \email{dsecofor@ull.edu.es}
\title{Optimal polynomial approximants and orthogonal polynomials on the unit circle. An electrostatic approach. }
\date{\today}
\thanks{The first and second authors are partially funded by PID2024-158075NB-I00, through the Generaci\'on de Conocimiento programme of Spanish Ministry of Science, Innovation and Universities; the third author is funded by grant PID2024-160185NB-I00,  through the Generaci\'on de Conocimiento programme and by grant RYC2021-034744-I of the Ram\'on y Cajal programme from Agencia Estatal de Investigaci\'on (Spanish Ministry of Science, Innovation and Universities).}
\begin{document}

\begin{abstract}
We explore the connection between two seemingly distant fields: the set of cyclic functions $f$ in a Hilbert space of analytic functions over the unit disc $\D$, on the one hand, and the families of orthogonal polynomials for a weight on the unit circle $\T$ (OPUC), on the other. This link is established by so-called Optimal Polynomial Approximants (OPA) to $1/f$, that is, polynomials $p_n$ minimizing the norm of $1-p_nf$, among all polynomials $p_n$ of degree up to a given $n$.

Here, we focus on the particular case of the Hardy space, and an electrostatic interpretation of the zeros of those OPA (and thus, of the corresponding OPUC) is studied. We find the electrostatic laws explaining the position of such zeros for a reduced but significant class of examples. This represents the first step towards a research plan proposed over a decade ago to understand zeros of OPA through their potential theoretic properties.

\end{abstract}
\maketitle

\section{Introduction}

In this article, we will explore a terrain at the intersection of operator theory and approximation theory, through a connection made in \cite{BKLSS2016} for the first time.  To better understand this, consider the \emph{shift} operator $S$ taking an analytic function $f$ to its multiplication by the independent variable.
A function $f$ is \emph{cyclic} in a space of analytic functions $H$ if $f$ belongs to no proper $S$-invariant subspace of $H$, that is, if $\P f$ is a dense subspace of $H$ (where $\P$ denotes the set of all polynomials). In relation to cyclic functions, a constructive approximation approach has been developed in the last decade from the work in \cite{BCLSS2015}, leading to the study of so-called optimal polynomial approximants. Our hope is to shed some light into the rationale behind the exact positions of the roots of such polynomials, following a path already hinted in that original article.

From now on, we also use the common notation that $\P_n$ stands for the subspace of polynomials of degree at most $n$. In most of the spaces studied, the constant function $1$ is cyclic and thus to determine whether a function $f$ is cyclic, it turns out to be enough to determine whether there exists a sequence of polynomials $\{q_n\}_{n\in\N}$ such that $1-q_nf$ converges to 0 in the norm of the space. We say that $p_n$ is an \emph{optimal polynomial approximant} (or \emph{OPA}) to $1/f$ of degree $n\in \N$ if $p$ minimizes the norm in $H$ of $1-pf$ for all $p \in \P_n$. Our aim is to better understand the zeros of OPA in terms of the choices of $f$ and $H$, although here we will concentrate on the case of $H$ being the simplest possible Hilbert space: The classical Hardy space, $H^2$, is the space of analytic functions over the unit disc with square summable Maclaurin coefficients. The norm can also be expressed in terms of the $L^2$ integral of the boundary values of the functions, which are well defined almost everywhere in a canonical sense. In the case that $H$ is $H^2$, Beurling's theorem provides a good characterization of cyclic functions as those that are \emph{outer}, that is, zero-free in the unit disc and such that the real part of its logarithm has the mean value property in the unit circle. We refer the reader to \cite{Garnett} for more on the basics of $H^2$, but what matters for us at this point is that this is a context in which cyclic functions are already well understood. This is in contrast to other function spaces such as the Dirichlet or Bergman spaces (see \cite{EFKMR2014} or \cite{HKZ2000} respectively), where important problems remain open. All these are examples of reproducing kernel Hilbert spaces over the unit disc $\D$, that is, spaces where, for each $\omega \in \D$, there exists an element $k_{\omega}$: $f(\omega) = \left\langle f, k_{\omega}\right\rangle$ for any $f$ in the space. We also recommend the classic \cite{BrSh} in which cyclicity is shown to require a necessary condition on logarithmic capacity. There, the celebrated Brown-Shields conjecture about cyclic functions is proposed regarding the sufficiency of their necessary condition \cite{BrSh}. An OPA-based attempt to attack the conjecture could consist on explaining the positions of the zeros of OPA in terms of potentials. If the potential appearing in the case of the Dirichlet space is a logarithmic potential plus an external field, then classical techniques from approximation theory (like the ones described in \cite{SaffTotik}) may include logarithmic capacity in a \emph{sufficient} condition for cyclicity. This is, as of now, still far away, and our plan is more modest: we focus on $H^2$ because we expect the potentials to be easier to identify. There, we are able to identify the role of two features of $f$ on the position of zeros of its corresponding OPA: the relative position of the zeros of $f$ and the multiplicity of said zeros. This plan was already hinted in Section 5 of the original article \cite{BCLSS2015}. Back indeed to $H^2$, OPAs connections with orthogonal polynomials on the unit circle (OPUC) were established in \cite{BKLSS2016} and in this paper we exploit such a connection in order to try and understand the position of the roots of the OPAs. We find that a mindset focused on electrostatic equilibria provides a good explanation for the position of the zeros.  We are also interested on the consequent asymptotic properties of their zeros (in terms of their degree).

Now consider a fixed $f\in H^2$ and the corresponding family of OPA $\{p_n\}_{n\in \N}$. The fact that $p_n f$ is the orthogonal projection of $1$ onto the subspace $f\,\P_n$ guarantees the existence and uniqueness of OPAs (provided $f \not \equiv 0$) and allows us to compute $p_n$: denote by $\varphi_k$ the polynomial of degree at most $k$ obtained from applying the Gram-Schmidt method to obtain an orthonormal basis $\{\varphi_k f\}_{k=0}^n$ of the space $f \P_n$. Denote by $\mu$ the measure over $\T$ with $d\mu(\theta):= |f(e^{i\theta})|^2 d\theta$. The $L^2(\T)$ representation of the $H^2$ norm implies $\{\varphi_k\}_{k\in \N}$ are an orthonormal basis of $\P$, equipped with the norm inherited as a subspace of $L^2(\mu)$. That is $\{\varphi_k\}$ are the OPUC for $\mu$. This leads to \cite[Proposition 3.1]{BKLSS2016}, which gives $p_n$ as
\begin{equation}\label{kernel}
p_n(z) = \,\overline{f(0)}\,\sum_{k=0}^n\, \overline{\varphi_k(0)}\,\varphi_k(z)\,=\, \overline{f(0)}\, K_n(z,0)\,,
\end{equation}
where $K_n(x,y)$ denotes the reproducing kernel of the space $\P_n$ at the point $y$ with the inherited norm ($k_y(x)$, in the notation introduced above for general RKHS). Accordingly, $p_nf$ is the reproducing kernel at $0$ in $f\P_n$ (as a subspace of $L^2(d\theta)$). Using the theory of OPUC (see e.g. \cite[Chapter 1]{Simon}, we can deduce from \eqref{kernel} the following expression, valid provided that $\varphi_n(0)\neq 0$ (otherwise $p_n=p_{n-1}$):
\begin{equation}\label{reversed}
p_n(z) = \overline{f(0)}\,A_n\,\varphi^*_n(z),
\end{equation}
where $A_n$ is the leading coefficient of $\varphi_n$
and $\varphi^*_n$ denotes, as usual, the reversed polynomial
$$\varphi^*_n(z) = z^n\,\overline{\varphi_n}(1/z)\,.$$
As an immediate consequence of \eqref{reversed} and using again the basic theory of OPUC (see e.g. \cite{Simon} as a general reference), we have that the zeros of $p_n$ lie outside the closed unit disk $\overline{\D}$.

With respect to the cyclic behaviour of a function $f$, some trivial cases include the following: firstly, when $f$ has at least one zero inside the disc, then every element of $f\P_n$ has that same zero and then the reproducing kernel property prevents $f$ from being cyclic; and on the other extreme, when $f$ is holomorphic beyond the boundary and zero-free on the closed disc, then $1/f$ is a really good function and its Taylor polynomials $q_n$ already make $1-q_nf$ converge to 0, making $f$ cyclic. Thus, an interesting class is formed by what we may call \emph{critical polynomials}, that is, polynomial functions $f$ without zeros in $\D$ but with some zeros on $\T$. For these, the explicit computation of the OPA has been achieved in a large class of spaces (with definitions analogous to the one presented here, except for the choice of norm to be minimized): In \cite{BCLSS2015}, OPA to $1/f$ with $f(z) = 1-z$ are explicitly computed. In \cite{BKLSS2016} this explicit expression for the OPA is given for $f(z) = (1-z)^N,\;N\in \N\setminus \{0\}$, and this is finally extended to $f(z)=(1-z)^a\,,\; \Re a>0$ in \cite{BKLSS2019}. For polynomials with zeros at several points, see \cite{AcSe}. It is our belief that in order to understand the role that logarithmic capacity plays on the cyclicity in the Dirichlet space we must first describe the positions of the zeros of OPAs. To do so for any function and space seems out of the question for now. We will study here the positions of zeros of OPAs in $H^2$ to $1/f$, where $f$ is a critical polynomial (or a positive power of a critical polynomial). For the examples with a single zero of $f$, the zeros of the OPA seem to approach the unit circle $\T$ (from outside), but with a certain ``delay'' or gap around the critical point $z=1$ (as if some kind of ``repellent'' was located at this point). In addition, the strength of this repellent seems to increase with the multiplicity of the root (see \cite{BKLSS2016}). However, the asymptotic distribution of zeros (commonly called weak asymptotics) is still an open problem, although in \cite{AcSe} it was established that the zeros of $\{1-p_nf\}$ asymptotically follow the uniform distribution in the unit circle $\displaystyle d\mu(z) = \frac{d\theta}{2\pi}$. It is conjectured that the same should hold for the OPA $\{p_n\}$ themselves. In this sense, every point of the unit circle is known to be a limit point of the zeros of OPA, thanks to the Jentzsch-type theorem shown in \cite{BKLSS2019}.

In the present article, we provide a tool based on a system of non-hermitian orthogonality relations for the analysis of both OPUC and OPA, which in turn gives an electrostatic interpretation for the zeros of both families of polynomials, at least for the case of very simple functions $f$, all of them powers of critical polynomials. In Section \ref{sect2}, we introduce the notation and history relative to electrostatic partners that will be needed in order to correctly write some differential equations satisfied by the zeros of the OPUC. This is then exploited in Section \ref{sect3}, where we analyze some simple examples, starting with the case where $f(z)= (1-z)^a$, and finding an explanation to the exact position of the zeros of OPAs by showing that the zeros of the OPUC are in equilibrium with respect to an explicit non-classical Jacobi system of forces. Then, Section 4, the main section of the paper, is devoted to the case of generalized Jacobi type weights. In Theorem \ref{thm:gralized}, we provide a similar electrostatic view of the zeros of OPAs when $f$ is a critical polynomial with simple zeros, all of which are on $\T$, with real coefficients (i.e., the zeros of $f$ are symmetric with respect to the real axis). The particular case when $f$ has two conjugate simple zeros $e^{\pm i \theta}$ is then explored in further detail, and we analyze the presence of so-called \emph{spurious} zeros together with their dependence on $\theta$. When $\theta = \pi \frac{k}{l}$, with $k, l \in \N$, the asymptotic positions of spurious zeros of $p_n$ (and $\varphi_n$) are given a satisfactory explanation in terms of the congruences $n \mod l$. These (possible) spurious zeros of the OPUC (and the OPA) are studied in more detail in Section 5, and a well-supported conjecture is formulated.

In Section 6 we conclude by proposing some directions for further pursuing this line of research.

\section{Electrostatic interpretation of zeros of Orthogonal Polynomials}\label{sect2}

Throughout this section we recall some old and recent notions about electrostatic interpretation of zeros of polynomials.

The study of this topic starts with the seminal contribution of T. J. Stieltjes about the electrostatic interpretation of the zeros of Jacobi polynomials in the classical setting (see \cite{Stieltjes85}-\cite{Stieltjes85c}, and Szeg\H{o}'s book \cite{Szego} for a detailed explanation).
Indeed, Stieltjes posed the problem of finding the minimal energy configuration $-1\leq x_1\leq \ldots \leq x_n \leq 1$ of $n$ positive unit charges placed in the interval $[-1,1]$, in the presence of two positive charges $p,q>0$ located respectively at the endpoints $1$ and $-1$. A logarithmic interaction between the charges was assumed, in such a way that the energy of the system has the expression
\begin{equation}\label{energy}
E(x_1,\ldots,x_n) = \,\sum_{1\leq i<j \leq n}\,\log \left(\frac{1}{|x_i-x_j|}\right)  + p\,\sum_{i=1}^n\, \log \left(\frac{1}{|x_i-1|}\right) + q\,\sum_{i=1}^n\, \log \left(\frac{1}{|x_i+1|}\right).
\end{equation}

It is easy to see that the minimum of the energy \eqref{energy} is attained at an ``inner'' configuration $-1<x_1<x_2<\ldots <x_n<+1$. Hence, the absolute minimum is also a relative one. With this, Stieltjes was able to require that the gradient of this energy vanishes at this optimal configuration $-1<x^*_1<x^*_2<\ldots <x^*_n<+1$,  i.e.
\begin{equation}\label{gradient}
\frac{\partial E}{\partial x_k}\,(x^*_1,x^*_2,\ldots,x^*_n)  = \sum_{j \neq k}\,\frac{1}{x^*_k-x^*_j}\,+\,\frac{p}{x^*_k-1}\,+\,\frac{q}{x^*_k+1}\,= 0,\;k=1,\ldots,n.
\end{equation}
Observe that the left-hand side of \eqref{gradient} can be intepreted as the total force acting over the charge placed at $x_k$; the summation corresponds to the forces exerted by each charge at $x_j$ over the charge at $x_k$, while the remaining terms describe the forces that the charges at the endpoints $\pm 1$ exert over the charge at $x_k$. Thus, this conditions on the gradient means that the total force at each charge vanishes, and thus, that this distribution of charges is in equilibrium.

Denote by $y(x) = \prod_{k=1}^n\,(x-x^*_k)$ the monic polynomial whose zeros are the optimal locations of the charges.
Then, we can notice the identity
$$\frac{y''(x^*_k)}{2y'(x^*_k)}\,=\,\sum_{j \neq k}\,\frac{1}{x^*_k-x^*_j}.$$
From here, he readily showed that this polynomial $y$ satisfies the second order differential equation (see \cite[Ch. IV]{Szego})
\begin{equation}\label{jacobiODE}
y''(x) + \left(\frac{2p}{x-1} + \frac{2q}{x+1}\right)\,y'(x) - \frac{n(n+2p+2q-1)}{x^2-1}\,y(x) = 0\,.
\end{equation}
Then, comparing this equation with that satisfied by Jacobi polynomials, he concluded that $\displaystyle y = P_n^{(\alpha, \beta)}$, with $\alpha = 2p-1,\,\beta = 2q-1\,.$

Finally, it is not difficult to check that the corresponding Hessian matrix at this critical configuration is positive definite and, hence, that at $x_1^*,\ldots,x_n^*$ an absolute minimum of the logarithmic energy is attained (see e.g. \cite{VaVA}).

Stieltjes himself extended this interpretation to other classical orthogonal polynomials, like those of Laguerre and Hermite, as well as to those now called Heine-Stieltjes polynomials (see e.g. \cite{Szego}). During the more than one hundred years since the Stieltjes' contributions, some extensions of this model have appeared, including the case of prescribed negative (attractive) charges (and, in turn, the corresponding escape of the free charges to other regions of the complex plane, see e.g. the survey  \cite{MMM}). The identification of a family of polynomials with a sequence of such potentials whose gradient vanish is usually understood by \emph{electrostatic interpretation} of those polynomials.


Recently, in \cite{MOS2023}, a new tool to explain an electrostatic model for zeros of more general classes of polynomials satisfying both standard and non-standard orthogonality relations was introduced. Although this idea was initially conceived to provide an electrostatic interpretation for the zeros of multiple (or Hermite-Pad\'e) orthogonal polynomials, has indeed a very general applicability, as was pointed out in \cite{MOS2023}. Consider an integrable (possibly complex value) weight in some contour $\Delta$ in the complex plane and such that
\begin{equation}\label{semic}
\frac{w'}{w}=\frac{B}{A}\,,
\end{equation}
which means that $w$ is a semi-classical weight (using the common terminology in the theory of orthogonal polynomials). Indeed, the identity \eqref{semic} is a characterization of this concept.

 Now, consider given an arbitrary polynomial $p$ and a semi-classical weight $w$ on a contour $\Delta \subset \C$. The \textit{electrostatic partner} $S$ is another polynomial given by an expression of the form
\begin{equation} \label{EP}
	S :=c\, D_{w}[p]	 =\textcolor[rgb]{0.00,0.07,1.00}{c}\, \det
	\begin{pmatrix} p & 	\widehat p  \\ A   p' & A \left( \widehat p\right)' - B \widehat p  \end{pmatrix},
\end{equation}
where $\widehat{p}$ is given by $$\widehat p (x) = \int_{\Delta}\,\frac{p(t) w(t)}{t-x}\,d t\,, $$
usually called the \emph{second-kind function}, and $c$ is an adequate constant so that $S$ is a monic polynomial. . Let us point out that despite the seemingly strange appearance of $S$ in \eqref{EP}, it is very useful in many particular instances to get a nice electrostatic interpretation of the zeros of $p$ (see \cite[Sect. 8]{MOS2023}).


Indeed, in \cite{MOS2023} a second order differential equation for $y = p$ is found in terms of its electrostatic partner $S$, namely
\begin{equation}\label{SODE}
A S y'' + ((A'+B) S -A S') y'+ C y = 0\,,
\end{equation}
for another polynomial $C$, usually called a Van Vleck polynomial in the setting of polynomial solutions of Lam\'e type differential equations (see e.g. \cite{Marden}), among many other references to these topics).
This can be simplified to a Stieltjes-type model that follows for the zeros $z_1,\ldots,z_n$, provided they are simple, yielding
\begin{equation}\label{Smodel}
\left(\frac{y''}{2y'}\,+\,\left(\frac{A'}{A}+\frac{B}{A}-\frac{1}{2}\,\frac{S'}{S}\right)\,\right)(z_k) = 0,\,k=1,\ldots,n.
\end{equation}
Therefore, the zeros of $p$ are in equilibrium in the external field due to the weight $w$ (in terms of the zeros of $A$, the endpoints of the arcs and/or curves comprising the contour $\Delta$), as expected, and to the attraction of negative charges of magnitude $1/2$ at the zeros of the electrostatic partner $S$ (whose location is a priori unknown).

On the other hand, note that the previous construction is quite general: in principle, $p$ is an arbitrary polynomial, not necessarily orthogonal; however, when $p$ satisfies some kind of (non-hermitian) orthogonality relations (full orthogonality, quasi-orthogonality or, even, multiple orthogonality) we can say much more about the zeros of these electrostatic partners.
In particular, in \cite{MOS2023} it is established that if $p$ is the $n$-orthogonal polynomial with respect to the weight $w$ on a certain contour (not necessarily intervals of the real axis), satisfying \eqref{semic}, then its electrostatic partner $S=S_n$ (in principle, it depends on the degree $n$) has degree at most $\sigma$, the so-called class of the semi-classical weight, given by
$$\sigma = \max \{\deg A -2,\, \deg B -1\}\,.$$

\section{Electrostatics for OPA and OPUC. First examples.}\label{sect3}

After this brief review on electrostatic interpretation of the zeros of polynomials, we dive into the search for an electrostatic model for the zeros of OPA in the Hardy space $H^2$. From the identity \eqref{reversed}, the OPA to $1/f$ for a given function $f$ have their roots located at the conjugates of the inverses of the roots of OPUC with respect to the weight $|f(e^{i\theta})|^2d\theta$. Thus, electrostatic models for the roots of OPUC will directly provide models for the roots of OPA. We will also obtain information on the electrostatic behaviour of orthogonal polynomials on the Unit Circle for an important class of weights. Indeed, the electrostatic model for the roots of OPUC will be analyzed through the following subsections; its counterpart for the roots of OPA can be obtained using the following result.
\begin{lemma}
Suppose that the set of different points $\{z_1,\dots, z_n\}\in\mathbb{C}\setminus\{0\}$ satisfies
\begin{equation}\label{EqGen}
\sum_{j \neq k}\,\frac{1}{z_k-z_j}\,+\,\sum_{\ell=1}^N\frac{\lambda_\ell}{z_k-a_\ell}=0\,,\qquad k=1,\dots,n\,,
\end{equation}
with $\lambda_\ell\in\mathbb{R}$, $a_\ell\in\mathbb{C}$ and $N\in\mathbb{N}$, i.e., its distribution is in electrostatic equilibrium under the external field created by charges of value $\lambda_\ell$ placed at $a_\ell$, for $\ell =1,\ldots,N$. Consider the map $z\mapsto z^*=1/\overline{z}$.
\begin{enumerate}
\item If $a_\ell\neq 0$ for any $\ell\in\{1,\dots,N\}$, then the configuration $\{z_1^*,\dots,z_N^*\}$ satisfies
$$\sum_{j \neq k}\,\frac{1}{z_k^*-z_j^*}\,+\,\frac{-n+1-\sum_{\ell=1}^N\lambda_\ell}{z_k^*}\,+\,\sum_{\ell=1}^N\frac{\lambda_\ell}{z_k^*-a_\ell^*}=0\,,\qquad k=1,\dots,n\,,$$
that is, its distribution is in electrostatic equilibrium in the external field created by charges of value $\lambda_\ell$ at each $a_\ell^*$, plus an extra charge placed at $0$ of value $-n+1-\sum_{\ell=1}^N\lambda_\ell$.
\item If some $a_{\ell_0}=0$, then the configuration $\{z_1^*,\dots,z_n^*\}$ satisfies
$$\sum_{j \neq k}\,\frac{1}{z_k^*-z_j^*}\,+\,\frac{-n+1-\sum_{\ell=1}^N\lambda_\ell}{z_k^*}\,+\,\sum_{\ell=1,\ell\neq \ell_0}^N\frac{\lambda_\ell}{z_k^*-a_\ell^*}=0\,,\qquad k=1,\dots,n\,,$$
i. e., its distribution is in electrostatic equilibrium in the external field created by charges at $a_\ell^*$ of value $\lambda_\ell$ for $\ell\neq \ell_0$, and of value  $-n+1-\sum_{\ell=1}^N\lambda_\ell$ for the one at $0$.
\end{enumerate}
\end{lemma}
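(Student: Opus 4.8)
The plan is to reduce the entire statement to a single elementary algebraic identity describing how a planar equilibrium condition transforms under the inversion $z \mapsto z^* = 1/\bar z$. The first move I would make is to strip the conjugation out of the map by conjugating the hypothesis \eqref{EqGen}: since each $\lambda_\ell \in \mathbb{R}$, taking complex conjugates gives
\[
\sum_{j\neq k} \frac{1}{\bar z_k - \bar z_j} + \sum_{\ell=1}^N \frac{\lambda_\ell}{\bar z_k - \bar a_\ell} = 0, \qquad k = 1, \dots, n.
\]
Setting $u_k := \bar z_k$ and $b_\ell := \bar a_\ell$, and observing that $z_k^* = 1/u_k$ and (when $a_\ell \neq 0$) $a_\ell^* = 1/b_\ell$, the problem becomes a purely holomorphic one: I must show that the points $w_k := 1/u_k$ inherit the advertised equilibrium from the points $u_k$.

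The core of the argument is the behaviour of the pairwise interaction under $w = 1/u$. From $w_k - w_j = (u_j - u_k)/(u_k u_j)$ one computes
\[
\frac{1}{w_k - w_j} = -\frac{u_k u_j}{u_k - u_j} = u_k - \frac{u_k^2}{u_k - u_j},
\]
so that summing over $j \neq k$ yields $\sum_{j\neq k} 1/(w_k - w_j) = (n-1) u_k - u_k^2 \sum_{j\neq k} 1/(u_k - u_j)$. I would then substitute the conjugated equilibrium relation $\sum_{j\neq k} 1/(u_k - u_j) = -\sum_\ell \lambda_\ell/(u_k - b_\ell)$, which turns the interaction sum into $(n-1) u_k + u_k^2 \sum_\ell \lambda_\ell/(u_k - b_\ell)$, now expressed entirely through the external field.

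The last step is to rewrite each external term back in the $w$-variable. A short partial-fraction computation (using $u_k = 1/w_k$ and $b_\ell = 1/a_\ell^*$) gives, for $b_\ell \neq 0$,
\[
u_k^2 \,\frac{\lambda_\ell}{u_k - b_\ell} = \lambda_\ell \left( \frac{1}{w_k} - \frac{1}{w_k - a_\ell^*} \right),
\]
together with $(n-1) u_k = (n-1)/w_k$. Collecting the coefficients of $1/w_k$ produces exactly the charge $-n+1 - \sum_\ell \lambda_\ell$ at the origin, while the remaining terms reproduce charges $\lambda_\ell$ at each $a_\ell^*$; this is conclusion (1). For conclusion (2) the only modification is that a charge with $a_{\ell_0} = 0$ has $b_{\ell_0} = 0$, so its term degenerates to $u_k^2 \lambda_{\ell_0}/u_k = \lambda_{\ell_0}/w_k$: the image point has escaped to infinity and its charge is simply absorbed into the one at the origin, leaving the sum over $\ell \neq \ell_0$ otherwise unchanged.

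The computation is entirely elementary, so I do not expect a genuine obstacle; the two places demanding care are the initial conjugation — which is what converts the non-holomorphic map $1/\bar z$ into the tractable $1/u$ and is the reason the hypothesis $\lambda_\ell \in \mathbb{R}$ is used — and the bookkeeping of the charge accumulating at the origin, where the factor $n-1$ coming from the $n-1$ pairwise interactions combines with the total external charge $\sum_\ell \lambda_\ell$. The degenerate case $a_{\ell_0} = 0$ is where I would be most attentive, since there the formula $a_\ell^* = 1/\bar a_\ell$ is singular and one must interpret the vanishing of the charge at infinity correctly.
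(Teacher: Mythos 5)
Your proof is correct and follows essentially the same route as the paper's: both arguments conjugate \eqref{EqGen} (this is where $\lambda_\ell\in\mathbb{R}$ is used), apply the inversion $u\mapsto 1/u$, and use a partial-fraction decomposition to collect all the $1/z_k^*$ contributions into the single charge $-n+1-\sum_{\ell=1}^N\lambda_\ell$ at the origin, with the $a_{\ell_0}=0$ term absorbed there in exactly the same way. The only difference is organizational — you split the pairwise interaction term first and then substitute the equilibrium relation, while the paper multiplies the whole transformed identity by $-1/(z_k^*)^2$ before decomposing — and you make explicit the conjugation step that the paper performs silently when asserting \eqref{EqGen} is ``equivalent to'' the relation in the starred variables.
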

\begin{proof}
Suppose first that $a_\ell\neq 0$ for any $\ell\in\{1,\dots,N\}$. Then, \eqref{EqGen} is equivalent to
$$
0=\sum_{j \neq k}\,\frac{1}{1/z_k^*-1/z_j^*}\,+\,\sum_{\ell=1}^N\frac{\lambda_\ell}{1/z_k^*-1/a_\ell^*}=
\sum_{j \neq k}\,\frac{z_k^*z_j^*}{z_j^*-z_k^*}\,+\,\sum_{\ell=1}^N\frac{\lambda_\ell z_k^*a_\ell^*}{a_\ell^*-z_k^*}\,,\qquad k=1,\dots,n\,,$$
multiplying by $-1/(z_k^*)^2$ we obtain
$$
\sum_{j \neq k}\,\frac{z_j^*}{z_k^*(z_k^*-z_j^*)}\,+\,\sum_{\ell=1}^N\frac{\lambda_\ell a_\ell^*}{z_k^*(z_k^*-a_\ell^*)}=0\,,\qquad k=1,\dots,n\,,$$
and decomposing in simple fractions
$$
\sum_{j \neq k}\,\left(\frac{-1}{z_k^*}+\frac{1}{z_k^*-z_j^*}\right)\,
+\,\sum_{\ell=1}^N\left(\frac{-\lambda_\ell}{z_k^*}+\frac{\lambda_\ell}{z_k^*-a_\ell^*}\right)=0\,,\qquad k=1,\dots,n\,,$$
which proves the lemma when all the $a_j$'s are different from $0$.

The proof when some $a_{\ell_0}=0$, is the same but taking into account that in this case, if we apply the previous transforms to the term $\lambda_{\ell_0}/z_k$, we obtain the new term $-\lambda_{\ell_0}/z_k^*$.
\end{proof}

\subsection{If the function only has one zero.}

Now, let us start by considering the function
\begin{equation}\label{example1}
f(z) = (1-z)^a,\,a>0.
\end{equation}
After the very simple case where $a=1$ (considered in \cite{BCLSS2015}), this more general example was studied in \cite{BKLSS2019}, also dealing with the possibility that $a$ is a complex number with $\Re a>0$; but for the moment we restrict ourselves to the case where $a$ is a positive real number. We know that in this case the OPA $p_n$ agrees with the reversed orthogonal polynomial $\varphi^*_n$ (up to a multiplicative constant) with respect to the weight function
\begin{equation}\label{weight1}
w(z) = |(1-z)^a|^2\,d\theta,\,\theta \in [0,2\pi].
\end{equation}
The OPUC $\varphi_n$ with respect to this weight were studied in detail in \cite{IsWi} (see also \cite{Rang} for the case where $a$ is complex with $\Re a>0$). Indeed, in \cite{IsWi} the authors provided an expression of $\varphi_n$ in terms of hypergeometric functions \cite[(2.27)]{IsWi} and, as a consequence, they obtained the following second order differential equation for the corresponding Szeg\H{o} polynomials:
\cite[(2.36)]{IsWi}
\begin{equation}\label{ODEIsmail}
\varphi_n''(z) + \left(\frac{1-n-a}{z} + \frac{2a+1}{z-1}\right)\,\varphi_n'(x) + \frac{n(a+1)}{z(1-z)} \varphi_n(z) = 0\,.
\end{equation}
Comparing this ODE with \eqref{jacobiODE}, we easily get that this OPUC are also Jacobi polynomials (replacing the usual $z=-1$ by $z=0$), but for non-classical values of the parameters: $\alpha = 2a > 0,\,\beta = \beta_n = -n-a < -n$. Thus, the charge located at the origin is a large negative number, and so this point acts as an attractor for the free charges. Hence, all the charges cannot lie on the segment between $0$ and $1$. Now, most of the charges leave the real axis.

The previous arguments may be summarized in the following statement:

\begin{theorem}\label{thm:jacobi}
Let $f(z) = (1-z)^a,\,a>0$. Denote by $p_n$ the OPA to $1/f$, with $f$ given in \eqref{example1}, in the Hardy space $H^2$, of degree at most $n$. We have:
\begin{itemize}
\item[(a)] $p_n(z) = C_n \varphi^*_n(z)\,,$ with $C_n$ being a constant only depending on $n$ and $\varphi^*_n$ the reversed orthogonal polynomial with respect to the weight \eqref{weight1} in $\T$.
\item[(b)] The orthogonal polynomial $\varphi_n$ is a Jacobi polynomial, changing one of the ``endpoint'' locations from $-1$ to the origin, with non-classical values of the parameters, namely,
\begin{equation}\label{jacobi}
\varphi_n(z) = B_n\,P_n^{(\alpha,\beta)}\,,
\end{equation}
where $B_n$ is a constant only depending on $n$ and $$\beta = \beta_n = -n-a\,,\; \alpha = 2a\,. $$
\item[(c)] For each $n$, the zeros of $\varphi_n$ are in the equilibrium position of $n$ positive unit charges in the complex plane, in the presence of the external field created by a repellent (positive charge) of magnitude $a+1/2$ at $z=1$ and an attractor (negative charge) of magnitude $(n+a-1)/2$ placed at the origin (as usual, a logarithmic interaction is assumed between the charges).
\item[(d)] For each $n$, the zeros of $p_n$ are in the equilibrium position of $n$ positive unit charges in the complex plane, in the presence of the external field created by a repellent (positive charge) of magnitude $a+1/2$ at $z=1$ and an attractor (negative charge) of magnitude $(n+a)/2$ placed at the origin.

As consequence $p_n(z)=C_n\,P_n^{(\alpha,\beta)}(z)$ with
$$\alpha =2a \,,\; \beta =\beta_n= -n-a-1 \,, $$
where again this Jacobi polynomials has the ``endpoint'' location changed from $-1$ to $0$.
\end{itemize}

\end{theorem}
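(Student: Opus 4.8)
The plan is to take the four items in order, deriving (a)--(c) from the Ismail--Witte differential equation \eqref{ODEIsmail} together with the classical Stieltjes correspondence of Section \ref{sect2}, and obtaining (d) as the genuinely new consequence of the preceding Lemma. For (a) I would simply specialize \eqref{reversed}: since $f(z)=(1-z)^a$ has $f(0)=1$, it reads $p_n = A_n\,\varphi_n^*$, so one takes $C_n=A_n$. For (b) the idea is to compare \eqref{ODEIsmail} with the Stieltjes--Jacobi equation \eqref{jacobiODE} after relocating one endpoint from $-1$ to $0$: a Jacobi-type second order equation with endpoints at $0$ and $1$ has first order coefficient $\tfrac{2p}{z-1}+\tfrac{2q}{z}$, a repelling charge $p$ at $1$ and $q$ at $0$, and matching the $\tfrac{1}{z-1}$ and $\tfrac{1}{z}$ coefficients of \eqref{ODEIsmail} forces $2p=2a+1$ and $2q=1-n-a$, i.e.\ $p=a+\tfrac12$ and $q=\tfrac{1-n-a}{2}$. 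The Jacobi dictionary $\alpha=2p-1$, $\beta=2q-1$ then yields $\alpha=2a$ and $\beta=-n-a$, which is \eqref{jacobi}.

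For (c) I would note that the zeros of $\varphi_n$, being zeros of an OPUC, are simple and lie in $\D$; hence the Stieltjes gradient (equilibrium) conditions \eqref{gradient}, in their version with endpoints $0$ and $1$, are valid and read $\sum_{j\neq k}\tfrac{1}{z_k-z_j}+\tfrac{p}{z_k-1}+\tfrac{q}{z_k}=0$. Here $p=a+\tfrac12>0$ is a repellent at $z=1$ and $q=\tfrac{1-n-a}{2}=-\tfrac{n+a-1}{2}<0$ is an attractor at the origin of magnitude $\tfrac{n+a-1}{2}$, exactly the asserted configuration.

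Part (d) is the main step, and here I would invoke the preceding Lemma with $N=2$, $(\lambda_1,a_1)=(a+\tfrac12,\,1)$ and $(\lambda_2,a_2)=(-\tfrac{n+a-1}{2},\,0)$. Since $a_2=0$, case (2) of that Lemma applies: the image configuration $\{z_k^*\}$ is in equilibrium with the surviving charge $\lambda_1=a+\tfrac12$ at $a_1^*=1$ and a single charge at the origin of value $-n+1-(\lambda_1+\lambda_2)=-\tfrac{n+a}{2}$. Because $\varphi_n$ has real coefficients (the weight \eqref{weight1} is even in $\theta$), one has $\varphi_n^*(z)=z^n\varphi_n(1/z)$, so the zeros of $p_n=C_n\varphi_n^*$ are precisely these $z_k^*$; thus they carry a repellent $a+\tfrac12$ at $1$ and an attractor $\tfrac{n+a}{2}$ at $0$. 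Reading these charges back through the dictionary of (b) gives $\alpha=2a$ and $\beta=-(n+a)-1=-n-a-1$.

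The step I expect to be the main obstacle is the final identification $p_n=C_n P_n^{(\alpha,\beta)}$ with the shifted endpoints: the electrostatic equilibrium pins down the forces but, a priori, not the named polynomial, so I must confirm that reversal sends the shifted Jacobi polynomial $P_n^{(2a,-n-a)}$ to $P_n^{(2a,-n-a-1)}$ up to a constant. I would establish this by transforming \eqref{ODEIsmail} under $z\mapsto 1/z$ together with the factor $z^n$ and checking that the resulting second order equation is exactly the shifted Jacobi equation with $\beta$ lowered by one; this also closes any gap coming from uniqueness of the equilibrium configuration, which for these non-classical (negative) parameters is not automatic.
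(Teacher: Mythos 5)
Your proposal is correct and follows essentially the same route as the paper: (a) is the direct specialization of \eqref{reversed} using $f(0)=1$; (b) and (c) come from matching \eqref{ODEIsmail} against the Stieltjes equation \eqref{jacobiODE} with the endpoint relocated from $-1$ to $0$, and your charge values $p=a+\tfrac12$, $q=\tfrac{1-n-a}{2}$ are exactly the paper's (note that the zeroth-order coefficients also agree, since $n(n+2p+2q-1)=n(a+1)$, which is what completes the identification of the ODE); and (d) is precisely the intended application of the Lemma opening Section \ref{sect3}, case (2), with your arithmetic $-n+1-(\lambda_1+\lambda_2)=-\tfrac{n+a}{2}$ checking out. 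Where you genuinely differ is the last identification $p_n=C_nP_n^{(2a,-n-a-1)}$: the paper obtains it from the electrostatic data together with uniqueness of the equilibrium configuration (Remark \ref{rem:equil}, via the Van Vleck coefficient being constant), whereas you propose transforming \eqref{ODEIsmail} directly under $u(z)=z^ny(1/z)$. That computation does succeed: one finds $u''+\left(\frac{-n-a}{z}+\frac{2a+1}{z-1}\right)u'+\frac{na}{z(1-z)}\,u=0$, which is the shifted Jacobi equation with $\alpha=2a$, $\beta=-n-a-1$, and this route is arguably cleaner since it bypasses the uniqueness question for unstable equilibria with non-classical parameters.

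One caveat: your justification of simplicity in (c), ``the zeros of $\varphi_n$, being zeros of an OPUC, are simple,'' is not a valid general principle --- for $d\mu=d\theta/2\pi$ one has $\varphi_n(z)=z^n$, an $n$-fold zero at the origin. Simplicity (and $z_k\neq 0$, which both the Stieltjes conditions and the Lemma require) must instead be extracted from the present setting: a common zero of $y$ and $y'$ at a point other than $0$ or $1$ would force $y\equiv 0$ by uniqueness for the ODE \eqref{ODEIsmail}; the point $z=1$ is excluded because OPUC zeros lie in $\D$; and $\varphi_n(0)\neq 0$ follows from the Ismail--Witte hypergeometric representation (equivalently, from the nonvanishing of the Verblunsky coefficients for this weight). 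The paper is equally silent on this point, so this is a repair rather than a flaw specific to your argument; with it supplied, your proof is complete.
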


\begin{remark}\label{rem:equil}
It is necessary to point out that the equilibrium position mentioned in Theorem \ref{thm:jacobi} does not define a stable equilibrium configuration (absolute minimum of the logarithmic energy), as in the case of Jacobi polynomials with classical values of the parameters ($\alpha, \beta >-1$), and rather represents an unstable equilibrium (stationary or saddle point). Anyway, it is the unique type of equilibrium possible in this setting; observe in this sense that rewriting the ODE \eqref{ODEIsmail} in a Lam\'e type equation form (that is, multiplying by $z(1-z)$), the Van Vleck type polynomial $C$ (coefficient of $y$) is a constant. Then, the equilibrium configuration is unique.
\end{remark}


\begin{remark}\label{rem:jacobinonclassic}
As said, Theorem \ref{thm:jacobi} deals with Jacobi polynomials with non-standard values of the parameters. It is well known that in the classical setting $\alpha, \beta>-1$, Jacobi polynomials $P_n^{(\alpha,\beta)}$ are orthogonal in the real interval $(-1,1)$ with respect to the weight
$$w(x) = (1-x)^{\alpha} (1+x)^{\beta}\,.$$
This is, for $k=0,1,\ldots n$, the following nice system of orthogonality relations holds with some $c_n \neq 0$:
\begin{equation}\label{orthclas}
\int_{-1}^1\,x^k\, P_n^{(\alpha,\beta)}\,dx = c_n \delta_{n,k}\,.
\end{equation}
This is no longer valid as we leave the classical setting.

This is a subject that has been studied for quite some time. In \cite[Theorem 6.72]{Szego} the well-known Hilbert-Klein formulas give the precise number of zeros of the Jacobi polynomials (with arbitrary real parameters) in the real line (of course, it is necessary to adapt the conclusions to our setting, where the usual left endpoint $-1$ is replaced by $0$ . In the particular setting of Theorem \ref{thm:jacobi}, where $\alpha = -n-a < -n$ (since we assume that $a>0$) and $\beta = 2a+1 >1$ (the value of $\beta$ is indeed classical), we obtain that $\varphi_n$ has no real zeros when $n$ is even, and a single real zero in the interval $(-1,0)$ for $n$ odd. Therefore, the OPA $p_n$ (its reversed polynomial) has no real zero if n is even, and a single real zero in the interval $(-\infty,-1)$ for $n$ odd.
\end{remark}



$$w(x) = (1-x)^{\alpha} (1+x)^{\beta}\,.$$
In \cite{KMO2005}, the orthogonality relations for Jacobi polynomials with non-classical parameters are studied in detail. For general values of the parameters, many zeros of the polynomial leave the interval $(-1,1)$, even the real axis, and describe different type of trajectories in the complex plane. Since the orthogonality relations are non-hermitian, the contour of integration is not prescribed in advance since it may be analytically deformed. Moreover, for the different particular cases the Jacobi polynomials are completely characterized for the orthogonality relations on a single contour of the complex plane, in some cases, or for orthogonality conditions distributed between two or more contours in others: for multiple or Hermite-Padé orthogonality, see e.g. \cite{NiSo} as a classical monography, and for a recent contribution on this topic, see \cite{MOS2023} as well as the references therein. Also see \cite[Fig. 6.1]{KMO2005}. In the particular case considered in Theorem \ref{thm:jacobi}, with $\beta = \beta_n < -n$ and $\alpha > 1$, we have a single contour of orthogonality which surrounds the origin (the attractor) and contains $z=1$ (a weak repellent).

Moreover, since in this case we have a family of Jacobi polynomials $P_n^{(\alpha_n,\beta_n)}$ with varying parameters such that
$$\lim_{n\rightarrow \infty} \frac{\alpha_n}{n} = 0,\;\lim_{n\rightarrow \infty} \frac{\beta_n}{n} = -1\,,$$
it is not difficult to check that their zeros asymptotically follow the Lebesgue distribution on the unit circumference $\T$. This is the so-called weak asymptotics; see e.g. \cite{MFOr2005}, and references therein. 
In particular, since they are Szeg\H{o} polynomials, their zeros approach $\T$ from inside.
Therefore, bearing in mind that the zeros of our OPA are the reversed of those of the Szeg\H{o} polynomial, we easily conclude that their zeros approach the unit circle $\T$ from outside; this fact fits perfectly with what has been observed in the numerical examples (see e.g. \cite[Figs. 1-2]{BKLSS2016}).


\subsection{If the function has two opposite zeros.}

Our second example is a simple Jacobi-type function of the form
$$f(z) = (1-z)^a(1+z)^{b},\,a,b>-1/2\,,$$
which induces a weight of (hermitian) orthogonality
\begin{equation}\label{hermit2}
w(z)= |(1-z)^{2a}||(1+z)^{2b}|
\end{equation}
on $\T$.

Since $z\in \T$, we have $(1-\overline{z})= - \frac{1-z}{z}$ and this can be used to obtain the following equivalent expression for the weight $w$,
$$w(z) = e^{\pi i a} z^{-a-b} (1-z)^{2a} (1+z)^{2b}\,.$$ This way, the Szeg\H{o} polynomials $\varphi_n$, besides the system of orthogonality relations in $\T$ with respect to the weight \eqref{hermit2}, also satisfy a system of non-hermitian orthogonality relations $\varphi_n \perp \varphi_m$, for $m < n$, with respect to the varying (with respect to the degree $n$) weight
\begin{equation}\label{nonhermit2}
W_n(z) = z^{-n-a-b}\,(1-z)^{2a} (1+z)^{2b}\,.
\end{equation}
To avoid too much repetition, we provide a more general proof of this in Lemma \ref{lem:nonhermit} below. Going back to the discussion in Section \ref{sect2} on the differential equations and the electrostatic partner, the weight $W_n$ in \eqref{nonhermit2} is a semi-classical weight:
\[\frac{W'_n(z)}{W_n(z)}= -\frac{n+a+b}{z} - \frac{2a}{1-z} + \frac{2b}{1+z}.\]
In particular, it has class $\sigma=1$. We conclude that for each $n$ there exists a polynomial $S_n\in \P_1$ such that the linear differential equation \eqref{SODE} for $\varphi_n$ and the corresponding electrostatic model \eqref{Smodel} for its (simple) zeros $z_k,\,k=1,\ldots,n$, holds.
Thus, we have that,
$$\frac{A'(z)+B_n(z)}{A(z)}\,=\,\frac{1-n-a-b}{z} +\,\frac{2a+1}{z-1} +\,\frac{2b+1}{z+1}\,.$$
Denote by $s_n$ the unique zero of the electrostatic partner $S_n$ for each $n$, which by symmetry, should be real. Therefore, the zeros $z_j$ of the OPUC $\varphi_n$ (where $j=1,\ldots,n$) should satisfy the identity
$$\frac{\varphi''(z_j)}{2\varphi'(z_j)}\,+\,\left(\frac{1}{2}\,\frac{A'(z_j)+B_n(z_j)}{A(z_j)}\,-\,\frac{1}{2}\,\frac{1}{z_j-s_n}\right)\,=0\,.$$
The Szeg\H{o} polynomials $\varphi_n$ with respect to the weight \eqref{nonhermit2} are also considered in \cite{IsWi}. Take into account the analysis carried out in the paper by Ismail and Witte, in particular formulas (3.21)-(3.23). Then, in fact, $s_n$ does not depend on the particular degree $n$ but just on its parity:
$$s_n = s_0=\,\frac{b+a}{b-a}\;\;\text{for}\;n\;\;{even;}\;\;s_n=s_1=\frac{1}{s_0}=\,\frac{b-a}{b+a}\;\;\text{for}\;n\;\;\text{odd}.$$
Observe that if, for example, $b>a>0$, that is, if the repellent located at $-1$ is stronger than that in $+1$, the weak attractor of magnitude $1/2$ placed at $s=s_n$ is closer to $+1$ (from outside or inside, depending on the parity of the degree).

In Figure \ref{Fig2_1and-1} below, we can see some plots for the roots of Szeg\H{o} polynomials and OPA together with the roots of their respective electrostatic partners for $a=1$ and $b=3$. Observe that in these examples, when $n$ is odd, the root of the electrostatic partner is inside the circle and attracts along with it a root of the Szeg\H{o} polynomials $\varphi_n$ (these two roots are almost coincident for large $n$). In particular this means that, for odd $n$, there is one root of $p_n$ which does not approach the circle. Notice also that the electrostatic interpretation explains the different size of the gaps around $1$ and $-1$: the charge at $1$ has value $3/2$ while the charge at $-1$ has value $7/2$, which makes the gap around $-1$ bigger (leaving an approximately $3.5$ times larger empty space around $-1$).

\begin{figure}[h]
\includegraphics[width=7cm]{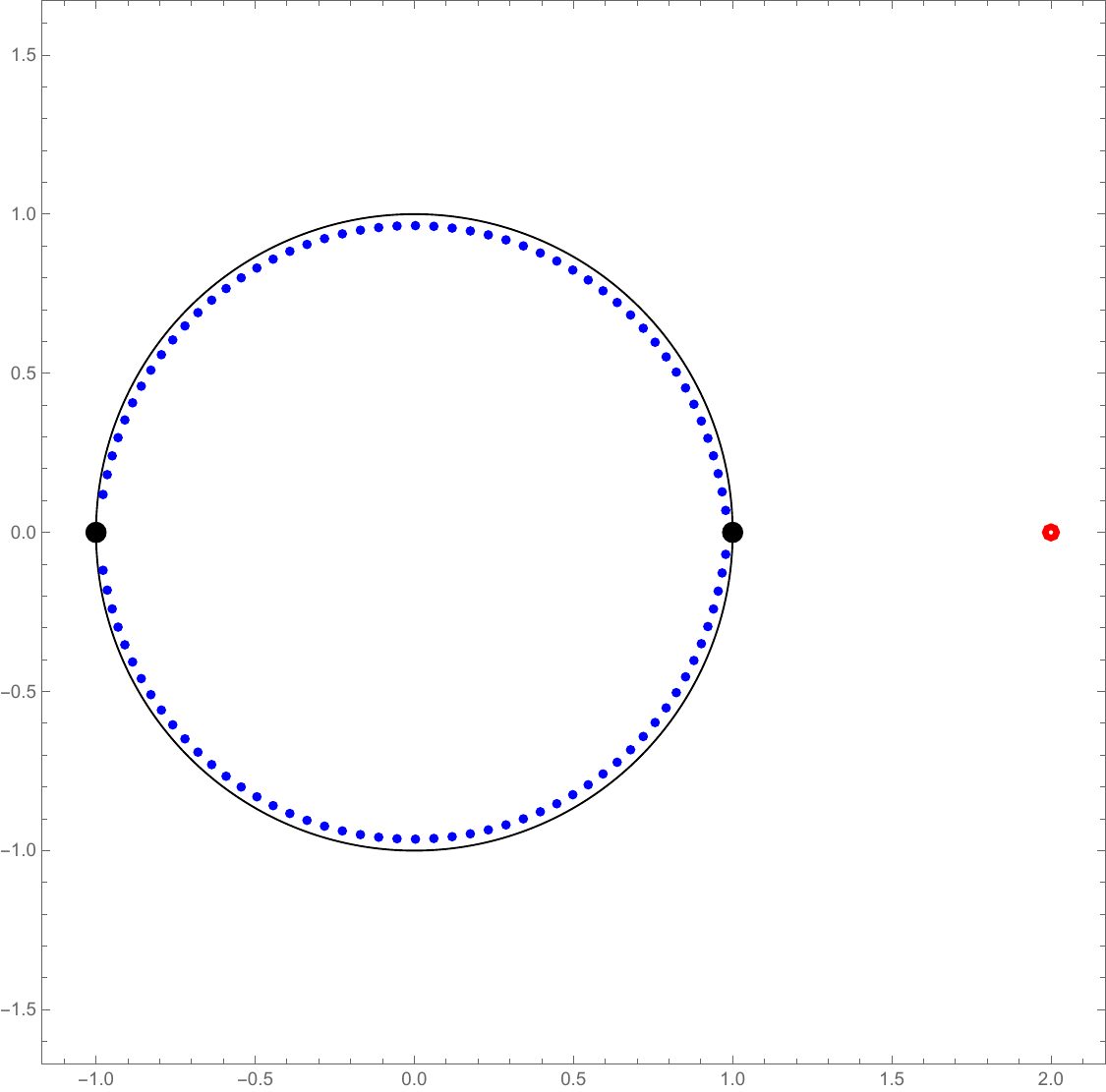}
\includegraphics[width=7cm]{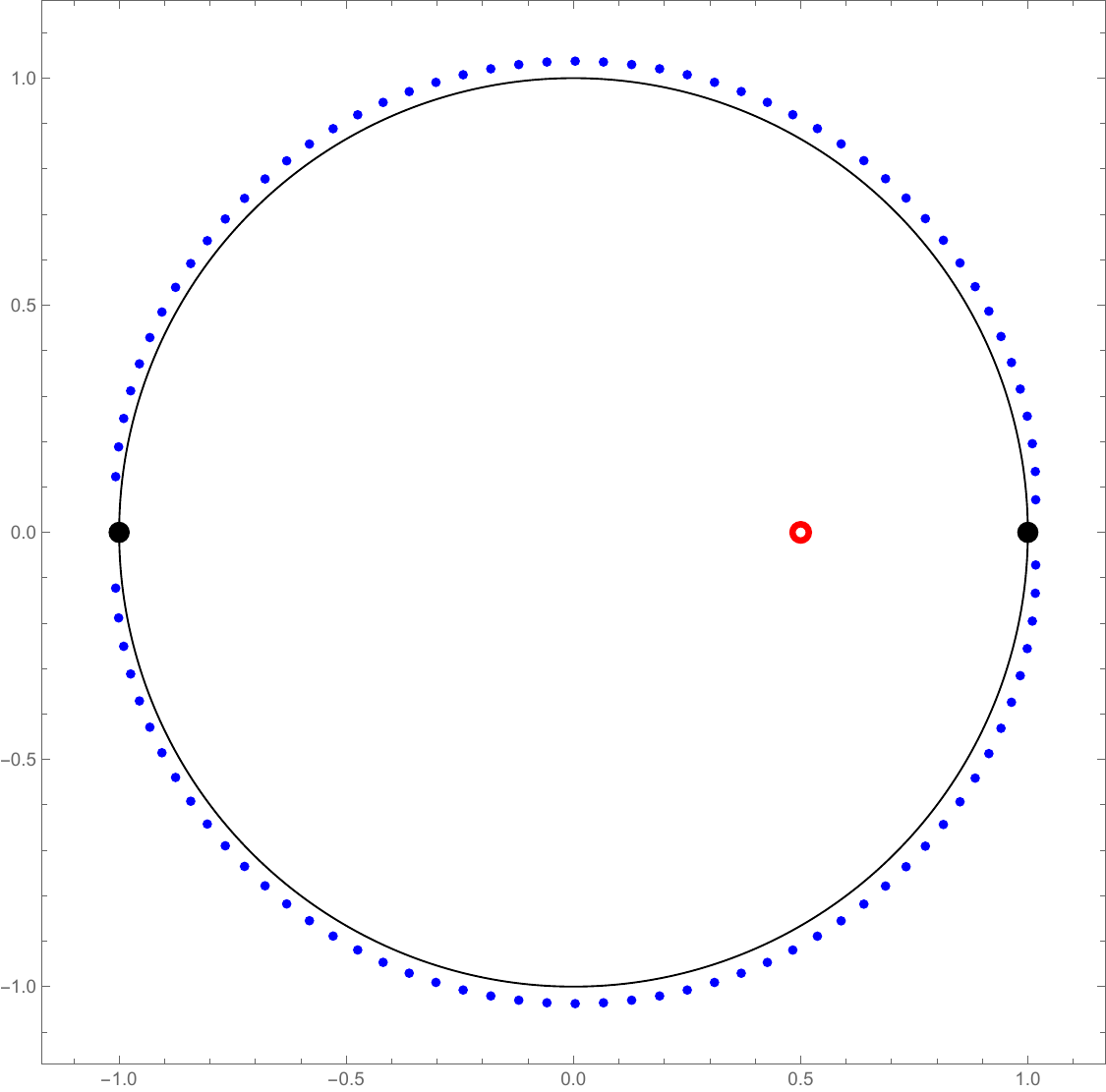}
\includegraphics[width=7cm]{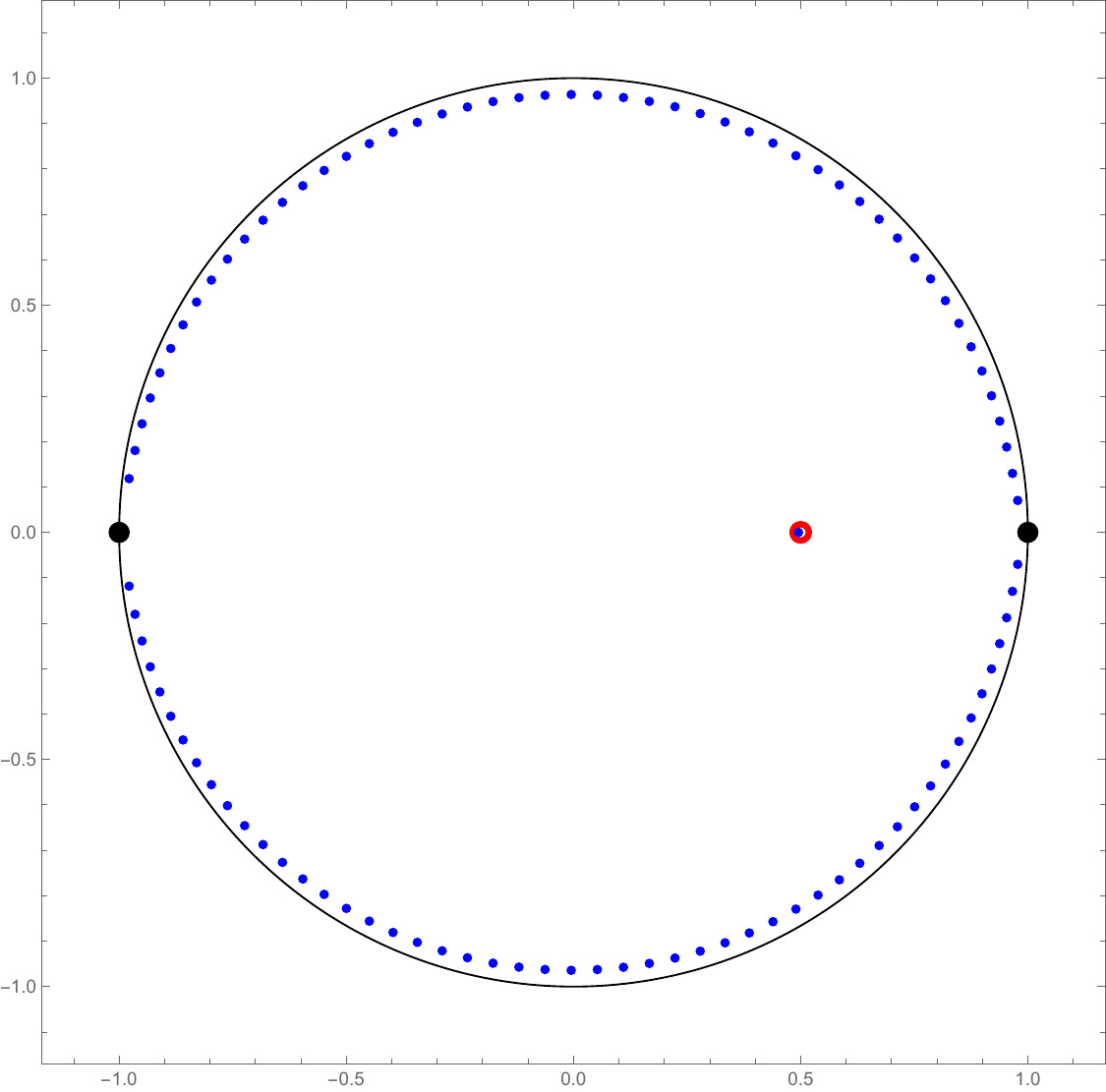}
\includegraphics[width=7cm]{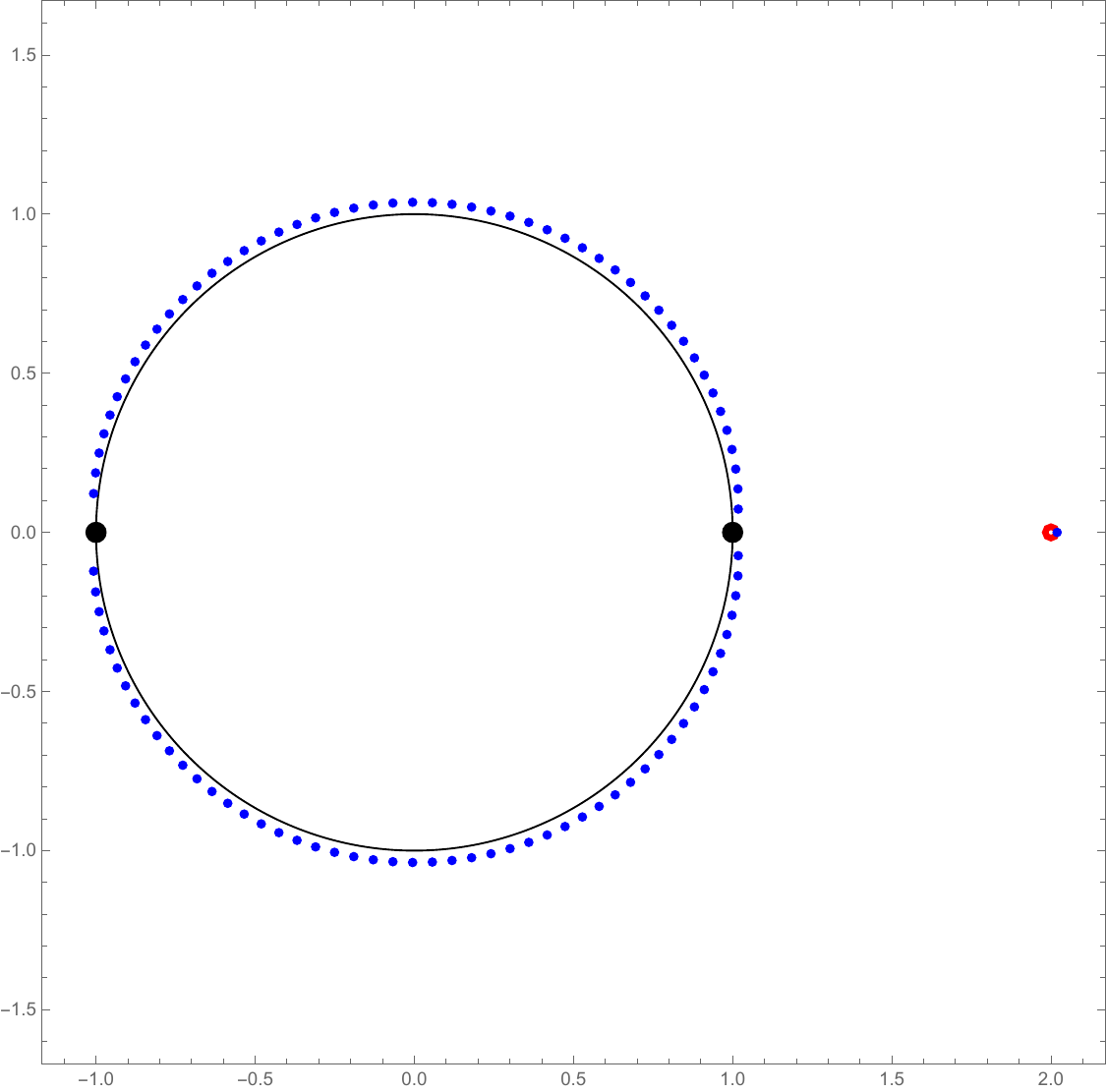}
\caption{\label{Fig2_1and-1} Roots (in blue) of $\varphi_n$ (left) and $p_n$ (right) for degrees $n=100$ (up) and $n=101$(down) associated to the weight $|1-z|^2\,|1+z|^6$ or, equivalently, to $f(z)=(1-z)(1+z)^3$. The spurious zeros or ``ghost'' roots correspond with the red circles.}
\end{figure}

\section{Electrostatics for OPA and OPUC with respect to Jacobi type weights.}

So far we have been studying the family of $\{\varphi_n\}$ for the measures \eqref{weight1} and \eqref{hermit2}; recall that the OPA $p_n$ agrees with the reversed polynomial up to a multiplicative constant, from the point of view of the Jacobi polynomials with non-standard parameters; however, we should recall that they are first and foremost OPUC. In this sense, in \cite{MMS2006} the authors studied OPUC with respect to a class of measures that includes both previous cases; namely, they considered the class of weights in $\T$ of the form
\begin{equation}\label{jacobiOPUC}
W(z) = w(z)\,\prod_{k=1}^m\,|z-a_k|^{2\beta_k}\,,
\end{equation}
where $|a_k|=1,\,\Re \beta_k >-1/2$ for all $k=1,\ldots,m$ and $w>0$ on $\T$ and can be extended as a holomorphic and non-vanishing function to an annulus containing the unit circle. In other words, they considered a class of generalized Jacobi-type weights on the unit circle. They use the powerful machinery of the Deift and Zhou's steepest descent method for Riemann-Hilbert problems (see \cite{DeZh} and \cite{Deift} as seminal references) to get strong asymptotics of the Szeg\H{o} polynomials $\varphi_n$, which are valid on the whole complex plane.
In particular, as for the zeros of $\varphi_n$, as $n\rightarrow \infty$, they prove that most of the zeros approach regularly and radially the unit circle, but close to each $a_k$ we have that the zeros converge faster to the unit circle than the rest, and leave a gap around $a_k$. 
Moreover, it could have at most $m-1$ spurious zeros that stay inside the unit circle. Notice that this is in agreement with our first example \eqref{example1}, where $W(z) = |(1-z)^a|^2$: the spurious zeros did not appear at all there, as predicted for $m=1$.

Following \cite{MMS2006}, let us consider now OPUC (Szeg\H{o} polynomials) with respect to weights of the form
\begin{equation}\label{GJOPUC}
w(z) = \prod_{j=1}^m\,|z-a_j|^{2\alpha_j},\; a_j\in \T,\,\alpha_j >-1/2, \,j=1,\ldots,m.
\end{equation}
First, observe that the (hermitian) orthogonality relations satisfied by this OPUC can be also interpreted as a (non-hermitian) system of orthogonality conditions on arcs  comprising the unit disk, which will be use to get an electrostatic model for its zeros. To see this, consider the complex function associated with the weight $w$ and $n$
$$W_n(z)= z^{-n}\,\prod_{j=1}^m\,z^{-\alpha_j}(z-a_j)^{2\alpha_j}\,,\qquad z\in\mathbb{C}\setminus\{a_1,\dots,a_m,0\}\,,$$
where the branch of each factor $(z-a_j)^{2\alpha_j}$ is taken to be positive for $z=a_j+t$ with $t>0$ and the cut goes from $a_j$ to $-\infty$ (if $\alpha_j\notin\mathbb{Z}$) following the segment from $a_j$ to $-1$ and then the interval $(-\infty,-1)$. The branch and the cut of the factor $z^{-\alpha_j}$ is defined as usual (positive in $(0,+\infty)$ and with the cut in $(-\infty,0)$).
\begin{lemma}\label{lem:nonhermit}
The OPUC $\varphi_n$, orthogonal in $\T$ (in an hermitian sense) with respect to the weight \eqref{GJOPUC}, also satisfies a full system of non-hermitian (and varying with the degree $n$) orthogonality relations, \[\varphi_n \perp \varphi_m, \qquad 0 \leq m<n\,.\]
Indeed there exists a function $\lambda$ on $\T$, which is constant on each arc defined on $\T$ by the points $\{a_1,\dots,a_m,-1\}$, such that this non-hermitian orthogonality can be expressed in the form
\[\int_\T z^k \varphi_n(z)\frac{W_n(z)}{\lambda(z)}dz=0\,,\qquad k=0,\dots,n-1\,.\]
\end{lemma}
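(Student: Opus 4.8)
The plan is to start from the defining hermitian orthogonality of the OPUC and rewrite it as a contour integral over $\T$, then to re-express the modulus weight $w$ in terms of the single-valued branch factors that make up $W_n$, paying for this change by a piecewise-constant phase factor which will be exactly $\lambda$. Concretely, I would first record that $\varphi_n$ is characterized by $\int_0^{2\pi}\varphi_n(e^{i\theta})\,\overline{e^{ik\theta}}\,w(e^{i\theta})\,d\theta=0$ for $0\le k<n$. Parametrizing $z=e^{i\theta}$ and using $\overline z=1/z$ and $d\theta=dz/(iz)$ on $\T$, these conditions become
\begin{equation*}
\int_\T \varphi_n(z)\,z^{-k-1}\,w(z)\,dz=0\,,\qquad 0\le k\le n-1\,.
\end{equation*}

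The second ingredient is a purely algebraic identity valid on $\T$. Since $|z|=|a_j|=1$ we have $\overline z=1/z$, $\overline a_j=1/a_j$, and hence $|z-a_j|^2=(z-a_j)(\overline z-\overline a_j)=-(z-a_j)^2/(z\,a_j)$. Raising to the power $\alpha_j$, multiplying over $j$, and collecting powers of $z$ gives that $w(z)=\prod_{j}|z-a_j|^{2\alpha_j}$ agrees, up to a nonzero constant and a branch discrepancy, with $z^{-\sum_j\alpha_j}\prod_{j}(z-a_j)^{2\alpha_j}=z^{n}\,W_n(z)$, by the very definition of $W_n$. Thus I expect to reach a relation of the shape $w(z)=z^{n}W_n(z)/\lambda(z)$, where $\lambda$ carries the phase/constant discrepancy. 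Substituting this into the contour integral above and reindexing by $k\mapsto n-1-k$ (so that $z^{-k-1+n}=z^{\,n-1-k}$ sweeps out $z^0,\dots,z^{n-1}$) yields precisely $\int_\T z^k\varphi_n(z)\,W_n(z)/\lambda(z)\,dz=0$ for $0\le k\le n-1$, which is the asserted non-hermitian system.

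The step I expect to be the main obstacle is the careful bookkeeping of the branches, i.e.\ showing that $\lambda$ is genuinely constant on each arc cut out by $\{a_1,\dots,a_m,-1\}$. The point is that the positive real value $|z-a_j|^{2\alpha_j}$ and the product of the chosen branches of $(z-a_j)^{2\alpha_j}$ and $z^{-\alpha_j}$ in $W_n$ differ only by a unimodular phase, and I must verify that this phase jumps only at prescribed points of $\T$. For this I would track the argument of each factor as $z$ traverses $\T$: the cut of $z^{-\alpha_j}$ along $(-\infty,0)$ meets $\T$ only at $-1$, while the cut of $(z-a_j)^{2\alpha_j}$, running along the chord from $a_j$ to $-1$ and then along $(-\infty,-1)$, is interior to $\overline\D$ and meets $\T$ only at the endpoints $a_j$ and $-1$. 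Consequently the accumulated phase discrepancy can change only when $z$ passes through one of the points $a_1,\dots,a_m$ or $-1$, so $\lambda$ (into which I also absorb the global constant $\prod_j(-1)^{\alpha_j}a_j^{-\alpha_j}$) is constant on each arc of $\T\setminus\{a_1,\dots,a_m,-1\}$, exactly as stated. Since multiplying the integrand by a nonzero constant does not affect the vanishing of the integral, this completes the reduction and establishes the lemma.
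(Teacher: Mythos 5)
Your overall strategy coincides with the paper's: rewrite the hermitian relations as contour integrals $\int_\T \varphi_n(z)\,z^{-k-1}\,w(z)\,dz=0$, reindex $k\mapsto n-1-k$, and reduce everything to showing that the unimodular ratio $z^{n}W_n(z)/w(z)$ is constant on each arc of $\T$ cut out by $\{a_1,\dots,a_m,-1\}$, defining $\lambda$ as that ratio. Your reduction to the single-factor quotient $z^{-\alpha_j}(z-a_j)^{2\alpha_j}/|z-a_j|^{2\alpha_j}$ and the identity $|z-a_j|^2=-(z-a_j)^2/(z a_j)$ on $\T$ are exactly the algebraic content of the paper's computation.

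The one place your write-up falls short of a proof is the final inference: ``the cuts meet $\T$ only at $\{a_j,-1\}$, consequently the phase discrepancy can change only at those points.'' Locating the cuts only shows the discrepancy is \emph{continuous} on each open arc; a continuous unimodular function can perfectly well drift along an arc, so continuity alone does not yield constancy. Constancy here is an arithmetic cancellation, not a topological fact: as $z=e^{i\theta}$ traverses $\T$ one has $\arg(z-a_j)=\frac{\pi}{2}+\frac{\theta+\arg a_j}{2}+k_1\pi$, so the argument of $(z-a_j)^{2\alpha_j}$ grows like $\alpha_j\theta$, and this drift is exactly offset by the factor $z^{-\alpha_j}$; this is precisely what the paper verifies, via the identity $(e^{i\theta}-e^{i\tau})/e^{i(\theta+\tau)/2}=2i\sin\frac{\theta-\tau}{2}$, which also pins down the integer $k_1$ separately on the two arcs on either side of $a_j$. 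The good news is that your own second paragraph already contains the repair: on each arc the identity $-(z-a_j)^2/(z a_j)=|z-a_j|^2>0$ lets you take continuous logarithms of both sides, so $2\log(z-a_j)$ and $\log|z-a_j|^2+\log(-z a_j)$ differ by a continuous $2\pi i\Z$-valued, hence locally constant, function; multiplying by $\alpha_j$ and exponentiating, the residual factor $(-z a_j)^{\alpha_j}z^{-\alpha_j}$ is likewise locally constant. With that step made explicit your argument is complete and is, in substance, the same proof as the paper's, the only difference being that the paper tracks the arguments explicitly while you route the bookkeeping through continuous logarithms of the algebraic identity.
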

\begin{proof}
Since for $k=0,\dots,n-1$,
\[0=\int_\T z^{-k}\varphi_n(z)\frac{w(z)}{z}dz=\int_\T z^{n-1-k}\varphi_n(z)\, \frac{w(z)}{z^{n}}dz\,,\]
and $n-1-k\in\{0,1,\dots,n-1\}$,
we only need to verify that $z^nW_n(z)/w(z)$ is a constant function in each of the arcs of $\T$. Then it is sufficient to check that
\begin{equation}\label{CocCte}
\frac{z^{-\alpha_j}(z-a_j)^{2\alpha_j}}{|z-a_j|^{2\alpha_j}}\,,
\end{equation}
is constant in the arc which goes from $-1$ to $a_j$ counterclockwise, and is also constant in the arc going from $a_j$ to $-1$ counterclockwise too. Obviously, the modulus of the quotient \eqref{CocCte} is $1$, let us see its argument. The identity
\[\frac{e^{i\theta}-e^{i\tau}}{\exp\left(i\,\frac{\theta+\tau}{2}\right)}
=\exp\left(i\,\frac{\theta-\tau}{2}\right)-\exp\left(i\,\frac{-\theta+\tau}{2}\right)
=2i\sin\frac{\theta-\tau}{2}\in\mathbb{R}i\]
with $\theta,\tau\in(-\pi,\pi]$, shows that with $z=e^{i\theta}$ and $a_j=e^{i\tau}$,
\[\arg(z-a_j)=\frac{\pi}{2}+\frac{\arg(z)+\arg(a_j)}{2}+k_1\pi\,,\]
for some $k_1\in\mathbb{Z}$ and with $\arg(z)$, $\arg(a_j)\in(-\pi,\pi]$. Taking into account the branch cut of $(z-a_j)^{2\alpha_j}$, we choose the argument of $(z-a_j)$ in such a way that
\[\arg(z-\xi)\in \left(\frac{\arg(a_j)}{2}-\pi,\frac{\arg(a_j)}{2}+\pi\right)\,.\]
Notice that $\arg(-1\pm 0i-a_j)=\arg(a_j)/2\pm \pi$, so for $z$ moving along the arc joining $-1$ and $a_j$ counterclockwise, $\arg(z-a_j)$ goes from $\arg(a_j)/2-\pi$ to $\arg(a_j)-\pi/2$, and for $z$ moving in the arc joining $a_j$ and $-1$ counterclockwise, $\arg(z-a_j)$ goes from $\arg(a_j)+\pi/2$ to $\arg(a_j)/2+\pi/2$. Hence, the value of $k_1$ depends on the side where $z$ is in $\T$ with respect to the points $a_j$ and $-1$ (indeed $k_1=-1$ if $\arg(z)\in(-\pi,\arg(a_j))$ and $k_1=0$ if $\arg(z)\in(\arg(a_j),\pi)$). Thus,
\begin{align*}
\arg\left(\frac{z^{-\alpha_j}(z-a_j)^{2\alpha_j}}{|z-a_j|^{2\alpha_j}}\right)
&=-\alpha_j\arg(z)+2\alpha_j\left(\frac{\pi}{2}+\frac{\arg(z)+\arg(a_j)}{2}+k_1\pi\right)+2k_2\pi\,,\\
&=\alpha_j\pi+\alpha_j\arg(a_j)+2\alpha_jk_1\pi+2k_2\pi
\end{align*}
for some $k_1,k_2\in\mathbb{Z}$. These arguments depend only on the arc where $z$ is located, but not on the exact value of $z$, so the quotient \eqref{CocCte} is constant in the arc from $-1$ to $a_j$ and also in the arc from $a_j$ to $-1$. then, we have that
\begin{equation}\label{IdentPesos}
\frac{w(z)}{z^{n}}=\frac{W_n(z)}{\lambda(z)}\,,\qquad z\in\T\,,
\end{equation}
with $\lambda:\T\longrightarrow \T$ being some piecewise constant function, and this proves the lemma.
\end{proof}

Now,
\begin{equation}\label{WnABn}\frac{W'_n(z)}{W_n(z)} = \,-\frac{n+\alpha}{z}\,+\,2\sum_{j=1}^m\,\frac{\alpha_j}{z-a_j} = \,\frac{B_n(z)}{A(z)}\,,
\end{equation}
where $\alpha = \sum_{j=1}^m\,\alpha_j$, $A(z) = z\,\prod_{j=1}^m\,(z-a_j)$ has degree $m+1$, and $B_n$ has degree $m$. This implies that for each $n$, $W_n$ is a semi-classical weight of class $\sigma = m-1$.

Thus, using again the results in \cite[Section 4]{MOS2023}, we conclude that there exists a polynomial $S_n$, of degree at most $m-1$ (the electrostatic partner of $\varphi_n$), such that the second order linear differential \eqref{SODE} equation is satisfied by $y= \varphi_n$, that is,
$$A S_n y'' + ((A'+B) S_n -A S'_n) y'+ C_n y = 0\,,$$
for some polynomial $C_n$.

Therefore, assuming that the zeros of $y= \varphi_n$ are simple, and denoting them by $z_1,\ldots,z_n$, and taking into account that
$$\frac{y''(z_k)}{2 y'(z_k)}\,=\,\sum_{j \neq k}\,\frac{1}{z_k-z_j}\,,$$
\eqref{SODE} yields for any $k=1,\ldots,n$ that
\begin{equation}\label{equilJgral}
\sum_{j \neq k}\,\frac{1}{z_k-z_j}\,+\,\left(\frac{(-n-\alpha +1)/2}{z_k}\,+\,\sum_{j=1}^m\,\frac{\alpha_j +1/2}{z_k - a_j}\,-\,\frac{1}{2}\,\frac{S'_n(z_k)}{S_n(z_k)}\right) = 0\,.
\end{equation}

Thus, we have the following electrostatic model for the zeros of $\varphi_n$.
\begin{theorem}\label{thm:gralized}
Assume that the zeros of $\varphi_n$, the OPUC with respect to weight \eqref{GJOPUC}, are simple. Then, they are in a (possibly unstable) equilibrium in the external field due to $m$ repellents placed at each $a_j$, of respective magnitude $\alpha_j + 1/2$, an attractor of magnitude $(n+\alpha-1)/2$, placed at the origin, as well as at most $m-1$ (counting multiplicities) attractors of magnitude $1/2$ located at the zeros of the corresponding electrostatic partner $S_n$.

As a consequence, the zeros of the OPA $p_n$ (the same assumption about their simplicity is considered) are in equilibrium in the external field created by $m$ repellents at $a_j$ with magnitude $\alpha_j+1/2$, an attractor at the origin of magnitude $(n+\alpha)/2$ and $m-1$ (counting multiplicities) attractors at the roots of the reverse of the electrostatic partner of magnitude $1/2$.
\end{theorem}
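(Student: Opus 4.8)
The first assertion requires essentially no new work: it is just the physical reading of the equilibrium relation \eqref{equilJgral}. Indeed, the first sum there is the mutual (logarithmic) repulsion of $n$ unit charges placed at the simple zeros $z_1,\dots,z_n$ of $\varphi_n$; the term $\frac{(-n-\alpha+1)/2}{z_k}$ is the force exerted by a charge of magnitude $(-n-\alpha+1)/2$ at the origin, i.e. an attractor of magnitude $(n+\alpha-1)/2$; each $\frac{\alpha_j+1/2}{z_k-a_j}$ is the force of a repellent of magnitude $\alpha_j+\tfrac12$ at $a_j$; and, writing $\frac{S_n'}{S_n}=\sum_{S_n(s)=0}\frac{1}{z-s}$, the term $-\tfrac12\frac{S_n'(z_k)}{S_n(z_k)}$ is the force of attractors of magnitude $1/2$ located at the zeros of $S_n$. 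Since \eqref{WnABn} shows that $W_n$ has class $\sigma=m-1$, we have $\deg S_n\le m-1$, which accounts for the ``at most $m-1$'' in the statement; this proves part (1).

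For the second assertion the plan is to transfer the equilibrium from $\varphi_n$ to $p_n$ through the map $z\mapsto z^*=1/\overline z$. By \eqref{reversed}, $p_n$ is (up to a constant) the reversed polynomial $\varphi_n^*$, and a direct check gives $\varphi_n^*(1/\overline{z_k})=\overline{z_k}^{\,-n}\,\overline{\varphi_n(z_k)}=0$; hence the zeros of $p_n$ are exactly the conjugate reciprocals $z_k^*$ of the zeros of $\varphi_n$, which are nonzero since \eqref{reversed} presupposes $\varphi_n(0)\neq0$. I would then apply the transformation Lemma at the beginning of Section \ref{sect3} to the configuration \eqref{equilJgral}, taking as external charges $\lambda_0=(-n-\alpha+1)/2$ at $a_0=0$, the repellents $\alpha_j+\tfrac12$ at the $a_j$, and the charges $-\tfrac12$ at the zeros of $S_n$. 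Because one charge sits at the origin, it is case (2) of that Lemma that applies. Under $z\mapsto z^*$ each $a_j$ is fixed (as $|a_j|=1$ forces $a_j^*=a_j$) with its magnitude unchanged, while the zeros of $S_n$ move to their conjugate reciprocals, which are precisely the roots of the reversed polynomial $S_n^*(z)=z^{m-1}\overline{S_n}(1/z)$, again with magnitude $1/2$.

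The only genuine computation is the magnitude of the resulting charge at the origin, which the Lemma delivers as $-n+1-\sum_\ell\lambda_\ell$. Summing the external charges, $\sum_\ell\lambda_\ell=\frac{-n-\alpha+1}{2}+\big(\alpha+\tfrac m2\big)-\tfrac d2$ with $d=\deg S_n$, so the origin charge equals $\frac{-n-\alpha+1-m+d}{2}=-\frac{n+\alpha}{2}-\frac{m-1-d}{2}$. When $d=m-1$ this is exactly $-(n+\alpha)/2$ and $S_n^*$ contributes $m-1$ nonzero attractors $z_k^*$, reproducing the stated configuration verbatim. The point I would treat most carefully — and the main, if modest, obstacle — is the bookkeeping when $\deg S_n<m-1$: reversing $S_n$ at the nominal degree $m-1$ produces a zero of multiplicity $m-1-d$ at the origin, and the extra $-\frac{m-1-d}{2}$ in the origin charge is exactly the contribution of these $m-1-d$ origin attractors of magnitude $1/2$. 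With this convention the count of $m-1$ attractors at the roots of $S_n^*$ and the origin attractor of magnitude $(n+\alpha)/2$ are jointly consistent with the transformed relation, which completes part (2).
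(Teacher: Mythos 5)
Your proposal is correct and follows essentially the same route as the paper: the first assertion is the direct electrostatic reading of \eqref{equilJgral} together with the class bound $\sigma=m-1$ from \eqref{WnABn}, and the second is obtained by applying case (2) of the transformation lemma of Section \ref{sect3} with the origin charge $(-n-\alpha+1)/2$, the repellents $\alpha_j+\tfrac12$ at the fixed points $a_j^*=a_j$, and the charges $-\tfrac12$ at the zeros of $S_n$, yielding the stated origin attractor $(n+\alpha)/2$. Your explicit bookkeeping for the degenerate case $\deg S_n<m-1$ (where reversing at nominal degree $m-1$ places $m-1-\deg S_n$ attractors of magnitude $\tfrac12$ at the origin, absorbing the surplus $-\tfrac{m-1-\deg S_n}{2}$) is a detail the paper leaves implicit, and it correctly reconciles the two counts in the statement.
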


In other words, we know the exact behaviour of the roots of $\varphi_n$ except for a few, at most $m-1$, with $m$ independent of $n$, degrees of freedom.
Hence, our main concern is to figure out properties of those zeros of $S_n$.

For our purposes, an interesting subclass of the weight functions \eqref{GJOPUC} consists of the real-symmetric generalized Jacobi weights of the form
\begin{equation}\label{symmGJOPUC}
w(z) = \prod_{j=1}^k\,|z-a_j|^{2\alpha_j}\,|z- \overline{a}_j|^{2\alpha_j},\; a_j\in \T,\,\alpha_j \in \N, \,j=1,\ldots,m\,,
\end{equation}
that is, the function $f$ related to $w$ is a polynomial with $m=2k$ zeros of possibly different multiplicities and real coefficients.

Before we proceed, we are going to provide an expression of the OPA when $f$ is a general polynomial.
\begin{theorem}\label{closedopa}
Let $d, n \in \N$, and $f \in \P_{d},$ with $f(0)=1$. Denote by $p_n$ the OPA to $1/f$ in $H^2$. Then there exist $u_0,...,u_{d-1}$ and $u_{n+d+1},...,u_{n+2d}$ such that
\[p_n(z) = \frac{z^d + \sum_{i=0}^{d-1} u_i z^i + \sum_{j=n+d+1}^{n+2d} u_jz^j}{f(z) f^*(z)}.\]
\end{theorem}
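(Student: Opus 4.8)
\textbf{Proof proposal for Theorem \ref{closedopa}.}

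The plan is to exploit the two structural facts already established in the excerpt about OPA in $H^2$: the reproducing-kernel formula \eqref{kernel}, which gives $p_n f = \overline{f(0)}\,K_n(\cdot,0)$ as the reproducing kernel of $f\P_n$ at $0$ inside $L^2(d\theta)$, and the fact from \eqref{reversed} that the zeros of $p_n$ lie outside $\overline{\D}$. First I would translate the defining minimality property of $p_n$ into an orthogonality statement: since $p_n f$ is the orthogonal projection of $1$ onto $f\P_n$, the error $1 - p_n f$ is orthogonal in $L^2(\T)$ to every element $z^k f(z)$ for $k = 0,\dots,n$. Writing this out as $\langle 1 - p_n f,\, z^k f\rangle = 0$ and using $\langle g, h\rangle = \frac{1}{2\pi}\int_\T g\overline{h}\,d\theta$ converts each condition into a statement about the Fourier (Laurent) coefficients of the boundary function $(1 - p_n f)\,\overline{f}$.

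The key computational step is to identify the boundary function $\overline{f(z)}$ on $\T$ with a rational expression. Since $f\in\P_d$, on $\T$ we have $\overline{f(z)} = f^*(z)/z^d$, where $f^*(z) = z^d\overline{f}(1/z)$ is the reversed polynomial (here $\overline{f}$ denotes the polynomial with conjugated coefficients, so that $f^*$ agrees with the convention used in the excerpt). Hence on $\T$,
\[
(1 - p_n f)\,\overline{f} \;=\; \frac{(1 - p_n f)\,f^*}{z^d}.
\]
The orthogonality relations $\langle 1 - p_n f,\, z^k f\rangle = 0$ for $k=0,\dots,n$ then say precisely that the Laurent coefficients of this function indexed by $k - d$, i.e.\ the coefficients of $z^{-d}, z^{-d+1},\dots,z^{n-d}$, all vanish. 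Therefore the function $(1 - p_n f)f^*$, which is a genuine polynomial (a product of polynomials), has vanishing coefficients in the range of exponents from $0$ through $n$. Writing $g := (1 - p_n f)\,f^* = f^* - p_n f f^*$, this is a polynomial of degree $n + 2d$ (since $\deg p_n \le n$, $\deg f = \deg f^* = d$) whose coefficients of $z^0,\dots,z^n$ all vanish, leaving only the coefficients of $z^{n+1},\dots,z^{n+2d}$ possibly nonzero.

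From here I would solve for $p_n$ directly. Rearranging $g = f^* - p_n f f^*$ gives
\[
p_n(z) \;=\; \frac{f^*(z) - g(z)}{f(z)\,f^*(z)},
\]
and the numerator $f^* - g$ has a controlled shape: its coefficients of $z^0,\dots,z^n$ equal those of $f^*$ (since $g$ vanishes there), while its coefficients of $z^{n+1},\dots,z^{n+2d}$ are the negatives of the corresponding coefficients of $g$. Because $f(0)=1$ forces $f^*$ to be monic of degree $d$ (its leading coefficient is $\overline{f(0)}=1$), and because $f^*$ has degree $d \le n$, the contribution of $f^*$ to the numerator lives entirely among the powers $z^0,\dots,z^d$. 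Collecting terms, the numerator is of the form $z^d + \sum_{i=0}^{d-1} u_i z^i + \sum_{j=n+d+1}^{n+2d} u_j z^j$: the leading part of $f^*$ supplies the $z^d$ term, the lower part of $f^*$ supplies $\sum_{i=0}^{d-1} u_i z^i$, and the top band of $g$ supplies $\sum_{j=n+d+1}^{n+2d} u_j z^j$; the middle coefficients $z^{d+1},\dots,z^{n+d}$ cancel because $g$ vanishes up to degree $n$ and $f^*$ has no terms above degree $d$. This yields exactly the asserted expression. The main obstacle I anticipate is bookkeeping the index ranges precisely—in particular confirming that the band of surviving coefficients of $g$ really starts at $z^{n+d+1}$ rather than $z^{n+1}$, which requires using that $g = (1-p_nf)f^*$ is divisible by $f^*$ and that the lowest surviving power is pushed up by the degree-$d$ factor $f^*$; one must check this divisibility structure carefully rather than merely counting coefficients, so the cleanest route is to argue directly with the factored form $g = (1-p_n f)f^*$ and track how the vanishing of the first $n+1$ coefficients of the bracketed factor interacts with multiplication by $f^*$.
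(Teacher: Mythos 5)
Your overall strategy is sound and is, in substance, the same as the paper's: the paper writes the normal equations $Mc=e_0$ from \cite{FMS2014} and identifies the Toeplitz entries $M_{j,k}=\langle z^kf,z^jf\rangle$ with the coefficients of $ff^*$, which is exactly your orthogonality computation read in Fourier language. But your execution contains a concrete off-by-$d$ index error, which is precisely the ``obstacle'' you flag at the end and fail to resolve. On $\T$ one has $\overline{z^kf(z)}=z^{-(k+d)}f^*(z)$, so
\begin{equation*}
\langle 1-p_nf,\,z^kf\rangle=\frac{1}{2\pi}\int_\T (1-p_nf)(z)\,f^*(z)\,z^{-(k+d)}\,d\theta\,,
\end{equation*}
which is the coefficient of $z^{k+d}$ in $g:=(1-p_nf)f^*$, not the coefficient of $z^{k}$: you shifted the index in the wrong direction. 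Hence orthogonality for $k=0,\dots,n$ gives $g_d=g_{d+1}=\dots=g_{n+d}=0$, and \emph{not} $g_0=\dots=g_n=0$ as you claim. Your version is in fact false: for $f(z)=1-z$ one computes $g_0=p_n(0)-1=-1/(n+2)\neq 0$; for the same reason your assertion that the numerator's coefficients in degrees $0,\dots,d-1$ ``equal those of $f^*$'' fails (they are $(f^*)_i-g_i$ with $g_i$ generally nonzero; this is harmless for the statement, which only asserts existence of the $u_i$, but it is symptomatic of the same slip).

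The downstream consequence is exactly the mismatch you noticed: with your band $[0,n]$ the surviving top coefficients start at $z^{n+1}$ instead of $z^{n+d+1}$, and the patch you propose --- a divisibility-by-$f^*$ argument --- cannot close the gap: $z^{n+1}f^*$ is divisible by $f^*$ and has vanishing coefficients in degrees $0,\dots,n$, yet a nonzero coefficient at $z^{n+1}$ whenever $f^*(0)\neq 0$, so divisibility together with your vanishing band does not force the band to start at $n+d+1$. With the corrected band no extra argument is needed: $p_nff^*=f^*-g$ has coefficient $(f^*)_d-g_d=1$ at $z^d$ (since $f^*$ is monic of degree $d$ because $f(0)=1$), coefficient $0-0=0$ at $z^{d+1},\dots,z^{n+d}$ (as $\deg f^*=d$ and $g$ vanishes there), and free coefficients $u_i=(f^*)_i-g_i$ for $i\le d-1$ and $u_j=-g_j$ for $j=n+d+1,\dots,n+2d$. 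That corrected computation is exactly the paper's proof, read off coefficientwise on $\T$ rather than through the Toeplitz linear system.
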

\begin{remark} Notice the following consequences of this result.
    \begin{itemize}
        \item[(a)] The fact that $p_n$ is a polynomial makes the unique choice of values $u_i$  so that the division is exact. This is an advance in the techniques generally known for OPA since it requires solving a square linear system of order $2d$ (rather than $n$) providing a closed formula for each $f$ yielding the OPA of any degree.
        \item[(b)] The formula simplifies when $f$ has real coefficients and even more when the zeros of $f$ are pairs of conjugate points in the Unit Circle, since in that case $f^*\equiv f$ and therefore, all $u_i$ must be real.
        \item[(c)] The condition that $f(0)=1$ does not suppose any lost generality since when $f(0)=0$ all OPA are identically null and otherwise one can divide $f$ by $f(0)$ resulting in multiplying $p_n$ by the same constant.
        \end{itemize}
\end{remark}

We leave the proof of Theorem \ref{closedopa} to a separate subsection (see Subsection 4.2 below).

Observe that from Theorem \ref{closedopa} it is also possible to obtain an analogous formula for the OPUC $\varphi_n$, since it is the reversed polynomial of $p_n$. Namely,
\begin{equation}\label{ExprOPUC}
\varphi_n(z)=\frac{\sum_{i=0}^{d-1}\overline{u_i}z^{n+2d-i}+z^{n+d}+\sum_{j=n+d+1}^{n+2d}\overline{u_j}z^{n+2d-j}}{f(z)f^*(z)}\,.
\end{equation}
Notice that the degree $d$ of $f$ here coincides with the corresponding value of $\alpha=\sum_{j=1}^m \alpha_j$ in the particular case mentioned above. Furthermore, recall that the second kind function associated to $\varphi_n$ is given by
$$\widehat{\varphi_n}(z)=\int_\T\frac{\varphi_n(t)}{t-z}\frac{W_n(t)}{\lambda(t)}dt\,,\qquad z\in\mathbb{C}\setminus{\T}\,,$$
and as consequence the electrostatic partner $S_n$ of $\varphi_n$ can be obtained from these coefficients $u_j$ as the following corollary shows.
\begin{corollary}\label{CorFun2kind}
Let $d, n \in \N$, and $f \in \P_{d}$. Then, the second kind function can be expressed for $|z|>1$ as
$$\widehat{\varphi_n}(z)=-2\pi i \sum_{j=1}^{d}\overline{u_{n+d+j}}z^{-n-j}$$
\end{corollary}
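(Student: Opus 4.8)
The plan is to evaluate the defining contour integral for $\widehat{\varphi_n}$ directly, reducing its integrand to a Laurent polynomial divided by $(t-z)$ and then reading off the residue at the origin. First I would use the identity \eqref{IdentPesos} to rewrite the kernel on the circle: since $W_n(t)/\lambda(t)=w(t)/t^n$ for $t\in\T$, we have
\[
\widehat{\varphi_n}(z)=\int_\T\frac{\varphi_n(t)}{t-z}\,\frac{w(t)}{t^n}\,dt\,.
\]
The elementary observation underlying everything is that on $\T$ the weight factors through the reversed polynomial: writing $w=|f|^2$ and using $\overline{f(t)}=f^*(t)/t^d$ for $t\in\T$ (which is immediate from $\overline t=1/t$ and the definition $f^*(z)=z^d\,\overline f(1/z)$), one obtains $w(t)=f(t)f^*(t)/t^{d}$ on $\T$.

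Next I would insert the closed expression \eqref{ExprOPUC} for $\varphi_n$. Denoting its numerator by $N$, so that $\varphi_n\,ff^*=N$, the apparent poles of $\varphi_n$ at the zeros of $ff^*$ cancel against the zeros of $w$, and the integrand turns into the genuine Laurent polynomial
\[
\frac{\varphi_n(t)\,w(t)}{t^n}=\frac{N(t)}{t^{\,n+d}}=\sum_{i=0}^{d-1}\overline{u_i}\,t^{d-i}+1+\sum_{j=1}^{d}\overline{u_{n+d+j}}\,t^{-n-j}\,,
\]
the last sum arising by reindexing the tail of $N$ through $j\mapsto n+d+j$. Thus $\widehat{\varphi_n}(z)$ becomes a finite combination of integrals $\int_\T t^m/(t-z)\,dt$.

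The computation then rests on two standard residue facts, valid precisely because $|z|>1$ so that the pole at $t=z$ lies outside $\overline{\D}$. For $m\ge0$ the integrand $t^m/(t-z)$ is holomorphic in a neighbourhood of $\overline{\D}$, whence $\int_\T t^m/(t-z)\,dt=0$ by Cauchy's theorem; for $m=-k$ with $k\ge1$, expanding $1/(t-z)=-\sum_{p\ge0}t^p/z^{p+1}$ and extracting the residue at $t=0$ gives $\int_\T t^{-k}/(t-z)\,dt=-2\pi i\,z^{-k}$. Feeding these into the display above, every non-negative power contributes nothing and each term $\overline{u_{n+d+j}}\,t^{-n-j}$ yields $-2\pi i\,\overline{u_{n+d+j}}\,z^{-n-j}$, producing exactly
\[
\widehat{\varphi_n}(z)=-2\pi i\sum_{j=1}^{d}\overline{u_{n+d+j}}\,z^{-n-j}\,.
\]
I expect the only delicate points to be purely organizational: verifying the weight identity $w=ff^*/t^d$ on $\T$ so that the integrand is truly a Laurent polynomial (this is where the cancellation of the denominator $ff^*$ of \eqref{ExprOPUC} happens), and keeping the index ranges of \eqref{ExprOPUC} aligned under the shift so that it is exactly the coefficients $u_{n+d+1},\dots,u_{n+2d}$ that survive. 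Fixing the counterclockwise orientation of $\T$ is what pins down the sign of the factor $2\pi i$.
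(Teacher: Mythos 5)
Your proposal is correct and follows essentially the same route as the paper's own proof: both rewrite the integrand via \eqref{IdentPesos} and the identity $f(t)f^*(t)=t^d\,w(t)$ on $\T$, use Theorem \ref{closedopa} (equivalently \eqref{ExprOPUC}) to reduce it to the Laurent polynomial $\sum_{i=0}^{d-1}\overline{u_i}\,t^{d-i}+1+\sum_{j=1}^{d}\overline{u_{n+d+j}}\,t^{-n-j}$ divided by $t-z$, and then evaluate term by term, with non-negative powers annihilated by Cauchy's theorem and $\int_\T t^{-k}/(t-z)\,dt=-2\pi i\,z^{-k}$ for $|z|>1$. The only cosmetic differences are your explicit reindexing $j\mapsto n+d+j$ and the geometric-series derivation of the residue, which the paper states without detail.
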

\begin{proof}Consider $t\in\T$ and $z$ with $|z|>1$. Taking into account \eqref{IdentPesos}, the identity
$$f(t)f^*(t)=f(t)\, t^d\, \overline{\,f(1/\overline{t})\,}=f(t)\,t^d\,\overline{\,f(t)\,}=t^d w(t)\,,$$
and Theorem \eqref{closedopa},
the second kind function can be written as
\begin{align*}
\widehat{\varphi_n}(z)&=\int_\T\frac{\varphi_n(t)f(t)f^*(t)}{t-z}\frac{dt}{t^{n+d}}
=\int_\T\frac{\sum_{i=0}^{d-1}\overline{u_i}t^{d-i}+1+\sum_{j=n+d+1}^{n+2d}\overline{u_j}t^{d-j}}{t-z}dt\\
&=\sum_{j=n+d+1}^{n+2d}\overline{u_j}\int_\T\frac{t^{d-j}}{t-z}dt
=\sum_{j=n+d+1}^{n+2d}\overline{u_j}\,(-2\pi i)\,z^{d-j}\,,
\end{align*}
where in the third identity we have used that some of the integrands are analytic and, hence, the corresponding integrals vanish.
\end{proof}
Now, the electrostatic partner can also be computed using \eqref{EP}, \eqref{ExprOPUC} and corollary \ref{CorFun2kind}
\begin{equation}\label{ComputS}
S_n(z)=c\,\left(A(z)\,\varphi_n(z)\,(\widehat{\varphi_n})'(z)-A(z)\,\varphi_n'(z)\,\widehat{\varphi_n}(z)-B_n(z)\,\varphi_n(z)\,\widehat{\varphi_n}(z)\right)
\end{equation}
where $A$ and $B_n$ are given by \eqref{WnABn} and $c$ is a normalizing constant so that $S_n$ is a monic polynomial. Observe that $S_n$ is a polynomial of degree $m-1$, so when we compute it by \eqref{ComputS}, all the coefficients associated to negative powers of $z$ vanish. An alternative way for the computation is
\begin{equation*}S_n(z)=c\frac{A(z)}{z f^2(z)}
\big(z(\varphi_nf^2)(z)\,(\widehat{\varphi_n})'(z)-z(\varphi_nf^2)'(z)\,\widehat{\varphi_n}(z)
+(n+2d)(\varphi_nf^2)(z)\,\widehat{\varphi_n}(z)\big)\,.
\end{equation*}

Next, the simple but very illustrative case when $f$ has two simple symmetric zeros will be analyzed in depth.

\subsection{Two conjugate simple zeros.}\label{sect33}

For the remainder of the Section we will focus on the simple, but non-trivial, case where $k = 1$ and
\begin{equation}\label{ftheta}
f(z) = (z-e^{i\theta})(z-e^{-i\theta})=z^2-2\cos \theta z+1\,.
\end{equation}

This gives rise to the hermitian weight
\begin{equation}\label{hermit3}
w(z) = |z^2-2\cos \theta z+1|^2
\end{equation}
and to the corresponding non-hermitian (and varying) one,
\begin{equation}\label{nonhermit3}
W_n(z) = z^{-n-2}\,(z^2-2\cos \theta z+1)^2.
\end{equation}
Thus, as in previous examples, zeros of $\varphi_n$ are subject to the strong attraction of the origin, to the weak repulsion of unit charges placed at $z=e^{i\theta}$ and $z=e^{-i\theta}$, and another weak attraction: As in Section 3.2, the class of the semi-classical weight is $\sigma = 1$ and, thus, it is generated a weak attraction of magnitude $1/2$ of a charge located at certain point $s_n \in \R$ (possibly depending on the degree $n$).

In this case, \eqref{equilJgral} implies the following electrostatic type identity for the zeros $\{z_k\}$ of $\varphi_n$ and the point $s_n$,
$$\sum_{j \neq k}\,\frac{1}{z_k-z_j}\,+\,\left(\frac{-(n+1)/2}{z_k}\,+\,\frac{3/2}{z_k-e^{i \theta}}\,+\,\frac{3/2}{z_k-e^{-i \theta}}\,-\frac{1}{z_k-s_n}\right)\,=0\,, \qquad k=1,...,n.$$

Now, in order to compute the OPUC $\varphi_n$ we can use the following expression, easily deduced from Theorem \ref{closedopa}:
\begin{equation}\label{expr}
\varphi_n (z) = c \,\frac{t_0+t_1 z+ z^{n+2} + t_2 z^{n+3} + t_3 z^{n+4}}{f(z)^2}\,,
\end{equation}
where $c$ is a constant (that actually depends on $n$), $f(z)$ is given by \eqref{ftheta} and the coefficients $t_i, i=1,\ldots,4$ may be determined by imposing that $\varphi_n$ is indeed a polynomial. This is easier to do than it seems, simply requiring that the numerator has double zeros at $z= e^{\pm i \theta}$. This yields the following linear system for $t_i, i=0,\ldots,3\,,$
\begin{equation}\label{system}
\begin{pmatrix} 1 & e^{i\theta}  &  e^{i (n+3) \theta} &  e^{i (n+4) \theta} \\
 1 &  e^{-i\theta} &  e^{-i (n+3) \theta} & e^{-i (n+4) \theta}  \\
 0 & 1 & (n+3) e^{i (n+2) \theta}  &
 (n+4) e^{i (n+3) \theta}  \\
 0 & 1 &  (n+3) e^{-i (n+2) \theta} &  (n+4) e^{-i (n+3) \theta}
\end{pmatrix} \begin{pmatrix} t_0 \\ t_1 \\ t_2 \\ t_3 \end{pmatrix} =\begin{pmatrix}
- e^{i (n+2) \theta} \\
 - e^{-i (n+2) \theta} \\
 - (n+2) e^{i (n+1) \theta} \\
 - (n+2) e^{-i (n+1) \theta}\,
\end{pmatrix}.
\end{equation}
In the particular case that $\displaystyle \theta = \frac{k \pi}{l}$, with $k,l$ nonnegative and relatively prime integers such that $2k\leq l$, it is easy to see that the only dependence on $\theta$ is through the orbit of $e^{i k \theta}$, giving the same exact unimodular numbers to determine $\varphi_n$ and $\varphi_{n'}$ provided that $n=n' \mod l$.

Denote by $R_0,...,R_3$ the 4 rows of the augmented system and by $\displaystyle \mathcal{S}_{k,l}= \frac{\sin(k\theta)}{\sin(l\theta)}$ (where we are assuming that the denominator is not 0). Then we make some substitutions: $\displaystyle R_0 \to \frac{R_0 + R_1}{2}$, $\displaystyle R_1 \to \frac{R_0 - R_1}{2 i \sin(\theta)}$, $\displaystyle R_2 \to \frac{R_2 + R_3}{2}$ and $\displaystyle R_3 \to \frac{R_2 - R_3}{2 i (n+3) \sin( (n+2)\theta)}$. Thus, we obtain the following augmented matrix for a system equivalent to \eqref{system}:
\begin{equation}\label{system2}
\left(\begin{array}{cccc|c} 1 & \cos(\theta)  &  \cos((n+3) \theta) &  \cos((n+4) \theta) & -\cos((n+2) \theta) \\
 0 &  1 &  \mathcal{S}_{n+3,1} & \mathcal{S}_{n+4,1} &
-\mathcal{S}_{n+2,1}  \\
 0 & 1 & (n+3) \cos((n+2) \theta)  &
 (n+4) \cos((n+3) \theta) & -(n+2) \cos((n+1) \theta)  \\
 0 & 0 &  1 &  \frac{(n+4)}{(n+3)}\mathcal{S}_{n+3,n+2} &  -\frac{(n+2)}{(n+3)} \mathcal{S}_{n+1,n+2}\,
\end{array}\right)
\end{equation}
Part of the relevance of \eqref{system2} lies on the fact that the coefficients are all real, and thus it is easy to check that the solution is real too.

Furthermore, the case of $f(z) = (z^2 - 2 \cos (\theta) z +1)$ was also studied in \cite{AcSe}. Denote by $T_{n,\theta}= \frac{1-e^{2i\theta (n+3)}}{1-e^{2i\theta}}$. Notice that this remains bounded as $n$ varies (its orbit is contained within a circle passing by 0, 1 and $1+e^{2i \theta}$).
In the mentioned article, it was determined that \[1-p_nf(z) =\sum_{j=0}^{n+2} d_{j,n}z^j,\] with \[d_{j,n} = \frac{((n+3)-T_{n,\theta}) e^{-ij\theta} + ((n+3)-\overline{T_{n,\theta}}) e^{ij\theta}}{(n+3)^2-|T_{n,\theta}|^2}.\]

If $\theta = k\pi /l$, as mentioned above, $T_{n,\theta}$ is also periodic, repeating every $l$ values, taking $l$ equidistributed values in the mentioned circle. We can study independently each of the congruent cases. For instance, when $n\equiv -3 \mod l$, then $T_{n,\theta}=0$ and
\[d_{j,n} = \frac{2\cos (j\theta)}{n+3}.\]
This can be used to determine the coefficients of $p_n f$ and those of $p_n f^2$ (or $\varphi_n f^2$) constructively. Indeed it allows to obtain the following asymptotics as $n\to\infty$.

\begin{proposition}\label{PropAstys}
Let us take $\theta=k\pi/ \ell\in(0,\pi/2]$ and for $n\in\mathbb{N}$ consider $n'\in\{0,\dots,2l-1\}$ with $n\equiv n'$ mod $2l$. Then the following asymptotics for the coefficients $t_i$, $i=0,\dots,3$ hold
\begin{align*}
&t_3=1-\frac{2}{n}+O(n^{-2})\,, &&t_2=-2\cos(\theta)+\frac{2\cos(\theta)}{n}+O(n^{-2})\,,\\ &t_1=\frac{2\cos((n'+3)\theta)}{n}+O(n^{-2})\,, &&t_0=-\frac{2\cos((n'+2)\theta)}{n}+O(n^{-2})\,.
\end{align*}
As consequence, if $\cos((n'+3)\theta)\neq 0$, then the spurious zero has the asymptotics
$$s_{n}=\frac{\cos(n'+2)\theta}{\cos(n'+3)\theta}+O(n^{-1})\,,$$
and if $\cos((n'+3)\theta)=0$ then the ghost charge diverges to $\infty$.
\end{proposition}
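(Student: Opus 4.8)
The plan is to obtain $t_0,\dots,t_3$ as the coefficients of $p_nf^2$ read in reverse order, starting from the explicit expansion of $1-p_nf$ borrowed from \cite{AcSe}, and then to feed those asymptotics into the formula for the electrostatic partner supplied by Corollary \ref{CorFun2kind}. The first observation is the bridge between the $t_i$ and the coefficients of $p_nf^2$: since $f$ is self\mbox{-}reciprocal ($f^*=f$) and $\varphi_n$ is the reversed polynomial of $p_n$ up to a constant, $\varphi_nf^2$ equals, up to the constant $c$ of \eqref{expr}, the reversed polynomial of $p_nf^2$ (which is genuinely of degree $n+4$). Comparing degrees with \eqref{expr}, whose numerator carries monomials only at degrees $0,1,n+2,n+3,n+4$, identifies $c\,t_3,\;c,\;c\,t_2,\;c\,t_1,\;c\,t_0$ with the coefficients of $z^0,z^2,z^1,z^{n+3},z^{n+4}$ in $p_nf^2$ (all real, as noted after Theorem \ref{closedopa}). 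Thus I only need those five coefficients, and from $p_nf=1-\sum_{j=0}^{n+2}d_{j,n}z^j$ and $p_nf^2=f\cdot(p_nf)$ each of them is an explicit combination, with weights $1$ and $-2\cos\theta$, of the five quantities $d_{0,n},d_{1,n},d_{2,n},d_{n+1,n},d_{n+2,n}$.

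Next I would expand the $d_{j,n}$. Because $T_{n,\theta}$ stays bounded, writing the numerator of $d_{j,n}$ as $2(n+3)\cos(j\theta)-2\Re\!\big(T_{n,\theta}e^{-ij\theta}\big)$ and the denominator as $(n+3)^2\big(1+O(n^{-2})\big)$ yields $d_{j,n}=\tfrac{2\cos(j\theta)}{n}+O(n^{-2})$, uniformly in $j$. For the two index values depending on $n$ I use $\theta=k\pi/\ell$: passing from $n$ to $n'$ with $n\equiv n'\ (\mathrm{mod}\ 2\ell)$ shifts the argument of each cosine by a multiple of $2\pi$, so $\cos((n+1)\theta)=\cos((n'+1)\theta)$ and $\cos((n+2)\theta)=\cos((n'+2)\theta)$. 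Assembling the five coefficients, the whole statement rests on two cancellations: the product\mbox{-}to\mbox{-}sum identity $2\cos\theta\cos((n'+2)\theta)-\cos((n'+1)\theta)=\cos((n'+3)\theta)$ collapses the $z^{n+3}$ coefficient to a single cosine, and $4\cos^2\theta-2-2\cos(2\theta)=0$ removes the $n^{-1}$ term from the $z^2$ coefficient, which is precisely the normalizer $c=1+O(n^{-2})$. Dividing the remaining four coefficients by $c$ gives the asserted expansions of $t_3,t_2,t_1,t_0$.

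For the spurious zero I would use that Corollary \ref{CorFun2kind} gives $\widehat{\varphi_n}(z)=-2\pi i\big(t_1z^{-n-1}+t_0z^{-n-2}\big)$, so its numerator is $Q(z)=z^{-n-2}(t_0+t_1z)$, while that of $\varphi_nf^2$ is $P(z)=t_0+t_1z+z^{n+2}+t_2z^{n+3}+t_3z^{n+4}$. Substituting into \eqref{ComputS} with $A=zf$ and $B_n$ read off from \eqref{WnABn}, the $f'$\mbox{-}contributions cancel and leave $S_n$ proportional to $f^{-1}\big[z(PQ'-P'Q)+\kappa\,PQ\big]$, with $\kappa$ the linear\mbox{-}in\mbox{-}$n$ constant inherited from $B_n$. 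The bracket is a Laurent expression whose polynomial part has degree three and is divisible by $f$; since $S_n=z-s_n$ is monic of degree one, $s_n$ is minus the ratio of the constant term to the leading coefficient of the quotient by $f$. A short computation shows the constant term is $(\kappa-2n-4)\,t_0$ and the leading one $(\kappa-2n-5)\,t_1t_3$, both $\asymp n$, so $s_n=-t_0/(t_1t_3)+O(n^{-1})=-t_0/t_1+O(n^{-1})$ using $t_3\to1$; inserting the expansions of $t_0,t_1$ yields $s_n=\cos((n'+2)\theta)/\cos((n'+3)\theta)+O(n^{-1})$ when $\cos((n'+3)\theta)\neq0$. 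When $\cos((n'+3)\theta)=0$ one instead has $t_1=O(n^{-2})$ while $t_0$ remains of order $n^{-1}$, forcing $s_n\to\infty$, the escaping ghost charge.

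The main obstacle is not any single estimate but the exact bookkeeping of these leading cancellations: the content of the proposition is precisely that $t_0,t_1$ are $O(n^{-1})$ rather than $O(1)$ and that the two trigonometric identities hold, so each $d_{j,n}$ must be controlled to relative order $n^{-1}$ and the error terms carried faithfully through the linear combinations. A secondary care point is the passage from $n$ to $n'$, which is exactly why the period $2\ell$ (not $\ell$) appears. I would also emphasize that routing the argument through \cite{AcSe} is cleaner than solving the real system \eqref{system2} directly, since the latter divides by $\sin((n+2)\theta)$ and degenerates on the classes $n\equiv-2\ (\mathrm{mod}\ \ell)$, whereas the $d_{j,n}$\mbox{-}formula is uniformly valid; the genuinely singular case is instead $\cos((n'+3)\theta)=0$, which is the one that expels the ghost charge to infinity.
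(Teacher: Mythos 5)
Your proposal is correct, but it takes a genuinely different route from the paper's own proof. The paper argues structurally: applying Cramer's rule to the system \eqref{system} (after the substitution $n\mapsto n'$ in the exponentials) it observes that each $t_i$ is a rational function of $n$ whose numerator has degree at most $2$ and whose denominator has degree exactly $2$ with leading coefficient $2\cos(2\theta)-2\neq 0$, so that $t_i=t_{i,0}+t_{i,1}/n+O(n^{-2})$, and it then determines $t_{i,0}$ and $t_{i,1}$ by solving two explicit limiting $4\times 4$ systems; the partner is read off as $S_n(z)=z+\frac{(n+4)t_0}{(n+3)t_1}+\frac{(n+2)t_2}{(n+3)t_3}+2\cos(\theta)$ from \eqref{ComputS} and Corollary \ref{CorFun2kind}. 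You instead import the closed-form coefficients $d_{j,n}$ of \cite{AcSe}, expand them uniformly in $j$ to relative order $n^{-1}$, and transfer them to the $t_i$ via the reversal $\varphi_nf^2\propto (p_nf^2)^*$; I checked your bookkeeping and it is sound: the passage from $n$ to $n'$ is exact in the cosines modulo $2\ell$, the two cancellations you isolate ($2\cos\theta\cos((n'+2)\theta)-\cos((n'+1)\theta)=\cos((n'+3)\theta)$ and $4\cos^2\theta-2-2\cos(2\theta)=0$) are exactly what reproduces the paper's $t_{i,1}$, and the normalizer is indeed $1+O(n^{-2})$. Your computation of the partner is also correct: in $z(PQ'-P'Q)+(n+2)PQ$ the negative powers cancel identically, the constant and leading coefficients are $-(n+2)t_0$ and $-(n+3)t_1t_3$, and the resulting $s_n=-\frac{(n+2)t_0}{(n+3)t_1t_3}$ coincides identically with the paper's expression (multiply the paper's $s_n$ by $(n+3)t_1t_3$ and use the $z^2$-coefficient matching in the factorization of the cubic by $f$), hence gives the same limit, including the divergence when $\cos((n'+3)\theta)=0$, where $t_1=O(n^{-2})$ while $t_0\asymp n^{-1}$ because the two cosines cannot vanish simultaneously for $\theta\in(0,\pi/2]$. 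As for what each approach buys: the paper's Cramer argument is structural, yields in principle full asymptotic expansions to every order, and does not depend on having a closed form for $p_nf$, so it is better suited to generalization toward the weights \eqref{GJOPUC}; yours is more elementary and makes the $O(n^{-2})$ error control transparent, but hinges on the explicit $d_{j,n}$, which is special to this two-zero $f$. Two minor shared glosses: the case $t_1=0$ exactly (partner degenerating to degree zero) is treated only asymptotically in both texts; and your parting criticism of \eqref{system2} is fair but moot, since the paper's proof of the proposition works from \eqref{system}, not from the row-reduced \eqref{system2}.
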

\begin{proof}
Let us consider the system \eqref{system} with the substitution $n\mapsto n'$ just in the exponentials. Applying Cramer's rule we can conclude that coefficients $t_i$, $i=0,\dots,3$ are rational functions on the variable $n$ (these coefficients depend on $n'$ and $\theta$ too). Indeed the numerators of such rational functions has degree at most $2$ in $n$ while the denominator is a polynomial of degree exactly two at least for $n$ large enough since its coefficient in $n^2$ is
$$2\cos(2\theta)-2\,,$$
which does not vanish since $\theta\neq 0$. Hence, the following type of asymptotics for the sequences $\{t_i\}$ holds
$$t_i=t_{i,0}+\frac{t_{i,1}}{n}+O(n^{-2})\,,$$
where the coefficients $t_{i,0}$ and $t_{i,1}$ may depend on $n'$ and $\theta$. In fact if we divide the third and fourth rows of the previous system by $n$ and take limit as $n\to\infty$ through a subsequence $\{n'+r2l\}_{r\in\mathbb{N}}$ we obtain the following system for $t_{i,0}$, $i=0,\dots,3$ $$
\begin{pmatrix} 1 & e^{i\theta}  &  e^{i (n'+3) \theta} &  e^{i (n'+4) \theta} \\
 1 &  e^{-i\theta} &  e^{-i (n'+3) \theta} & e^{-i (n'+4) \theta} \\
 0 & 0  & e^{i (n'+2) \theta}  & e^{i (n'+3) \theta}  \\
 0 & 0 &  e^{-i (n'+2) \theta} &  e^{-i (n'+3) \theta}
\end{pmatrix}
\begin{pmatrix}t_{0,0}\\t_{1,0}\\t_{2,0}\\t_{3,0}\end{pmatrix}
=
\begin{pmatrix}
-e^{i(n'+2)\theta}\\-e^{-i(n'+2)\theta} \\-e^{i(n'+1)\theta}\\-e^{-i(n'+1)\theta}
\end{pmatrix}
$$
whose unique solution is
$$ t_{3,0}=1\,,\qquad t_{2,0}=-2\cos\theta\,,\qquad t_{1,0}=0\,,\qquad t_{0,0}=0\,.$$
With the standard techniques and the same ideas we can obtain the following system for $t_{i,1}$, $i=0,\dots,3$
$$
\begin{pmatrix} 1 & e^{i\theta}  &  e^{i (n'+3) \theta} &  e^{i (n'+4) \theta} \\
 1 &  e^{-i\theta} &  e^{-i (n'+3) \theta} & e^{-i (n'+4) \theta} \\
 0 & 0  & e^{i (n'+2) \theta}  & e^{i (n'+3) \theta}  \\
 0 & 0 &  e^{-i (n'+2) \theta} &  e^{-i (n'+3) \theta}
\end{pmatrix}
\begin{pmatrix}t_{0,1}\\t_{1,1}\\t_{2,1}\\t_{3,1}\end{pmatrix}
=
\begin{pmatrix}
0\\0 \\e^{i(n'+1)\theta}-e^{i(n'+3)\theta}\\e^{-i(n'+1)\theta}-e^{-i(n'+3)\theta}
\end{pmatrix}
$$
which has 
$$t_{3,1}=-2\,,\qquad t_{2,1}=2\cos\theta\,,\qquad t_{1,1}=2\cos(n'+3)\theta\,,\qquad t_{0,1}=-2\cos(n'+2)\theta\,,$$
as the unique solution. Then the asymptotics for the coefficients $t_{i}$, $i=0,\dots,3$ holds.

The electrostatic partner $S_n$ can be computed from the identity \eqref{ComputS} and Corollary \ref{CorFun2kind} as
$$S_n(z)=z+\frac{(n+4)t_0}{(n+3)t_1}+\frac{(n+2)t_2}{(n+3)t_3}+2\cos(\theta)\,,$$
from where the asymptotics for the ghost charge $s_n$ is obtained (observe that $s_n$ diverge only when the quotient $t_0/t_1$ diverges  which happens when $t_1=O(n^{-2})$).
\end{proof}

\begin{remark}Notice that the behaviour of the ghost charge is periodic with period $\ell$ instead of $2l$.
\end{remark}

In Figure \ref{Fig2conjugate} we can see an example of the behaviour of the roots of $\varphi_n$, its electrostatic partner, $p_n$, and the reverse of the electrostatic partner. In this example, the coefficients $T_{n,\theta}$ are periodic in such a way that we observe 3 different behaviours depending on $n\equiv 1,2,3$ mod $3$. These three types of behaviours can also be explained by the limit of the ghost root given in Proposition \ref{PropAstys}: $s_n\to1/2$ for $n\equiv 0$ mod $3$, $s_n\to 2$ for $n\equiv 1$ mod $3$ and $s_n\to -1$ for $n\equiv 2$ mod $3$. In these cases, when $n\equiv 0$ modifies $3$, the root of the electrostatic partner is inside the circle and attracts with it a root of $\varphi_n$ (the same behavior as in the examples of Figure \eqref{Fig2_1and-1} with odd $n$).  When $n\equiv 1$ mod $3$, the root of the electrostatic partner is outside the circle, but this does not produce any special phenomenon. When $n\equiv 2$ mod $3$, the root of the electrostatic partner get close to the circle from inside, deforming the contour where the roots of $\varphi_n$ or $p_n$ lie.

\begin{figure}
\includegraphics[width=6cm]{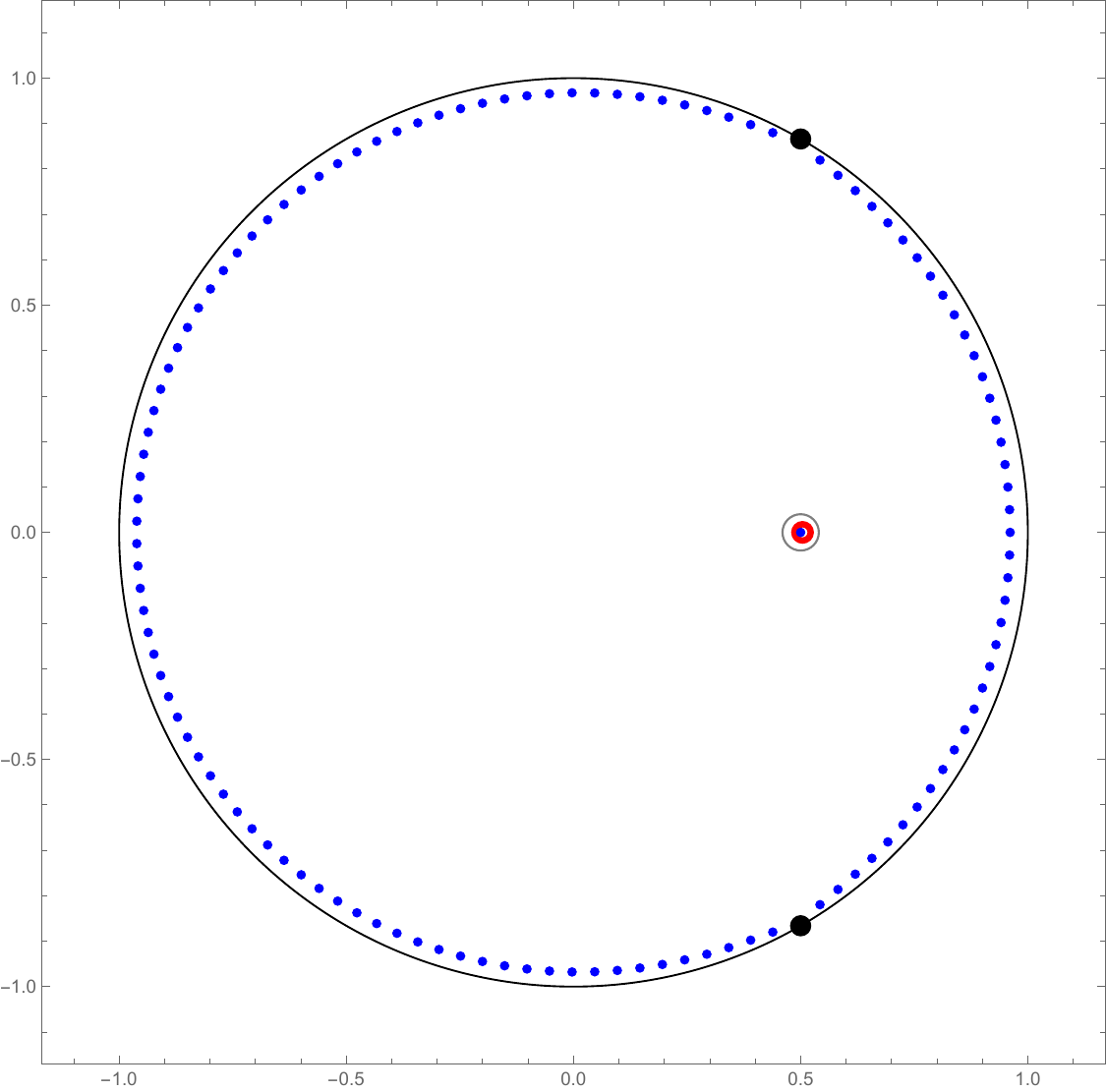}
\includegraphics[width=6cm]{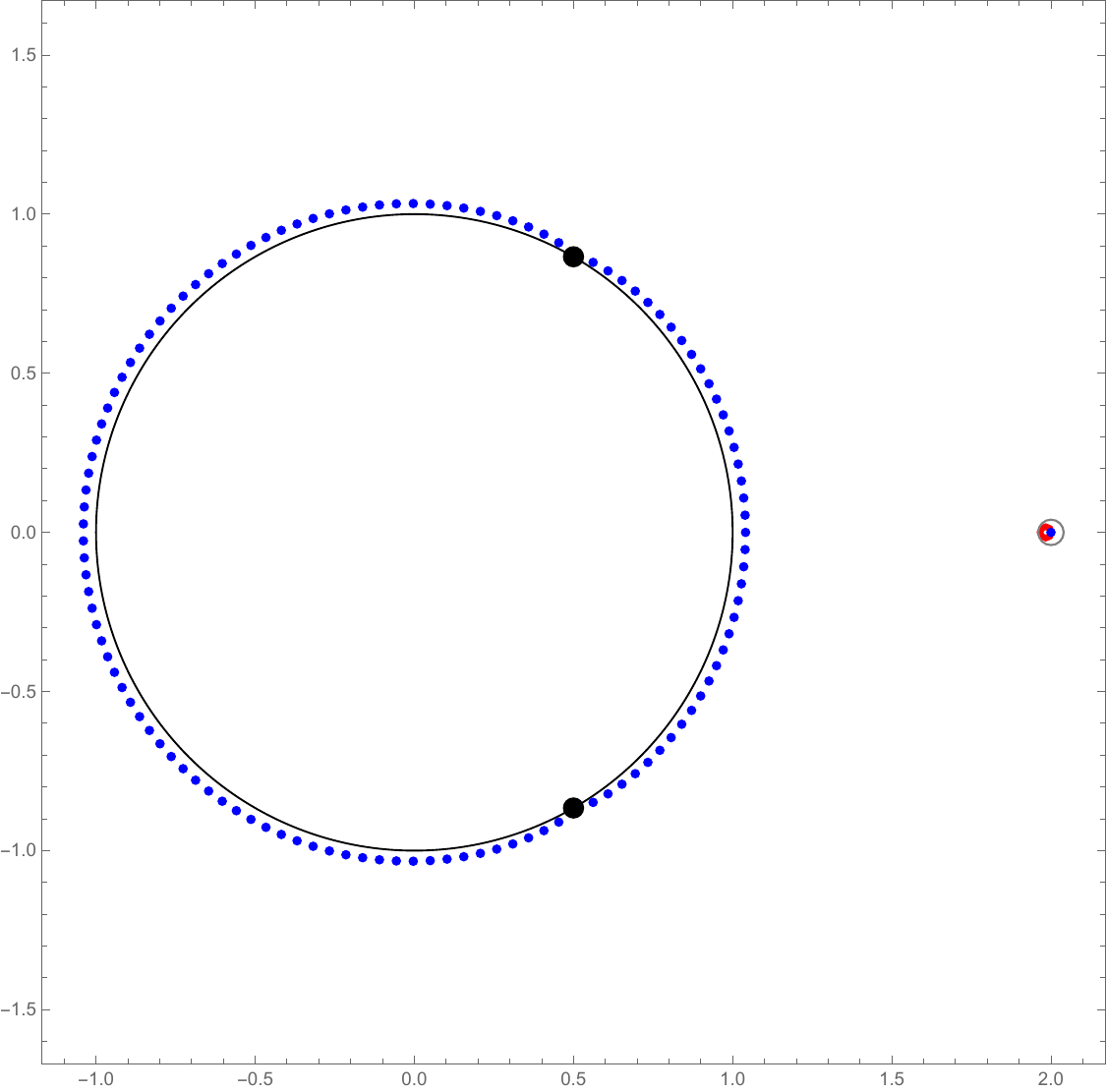}
\includegraphics[width=6cm]{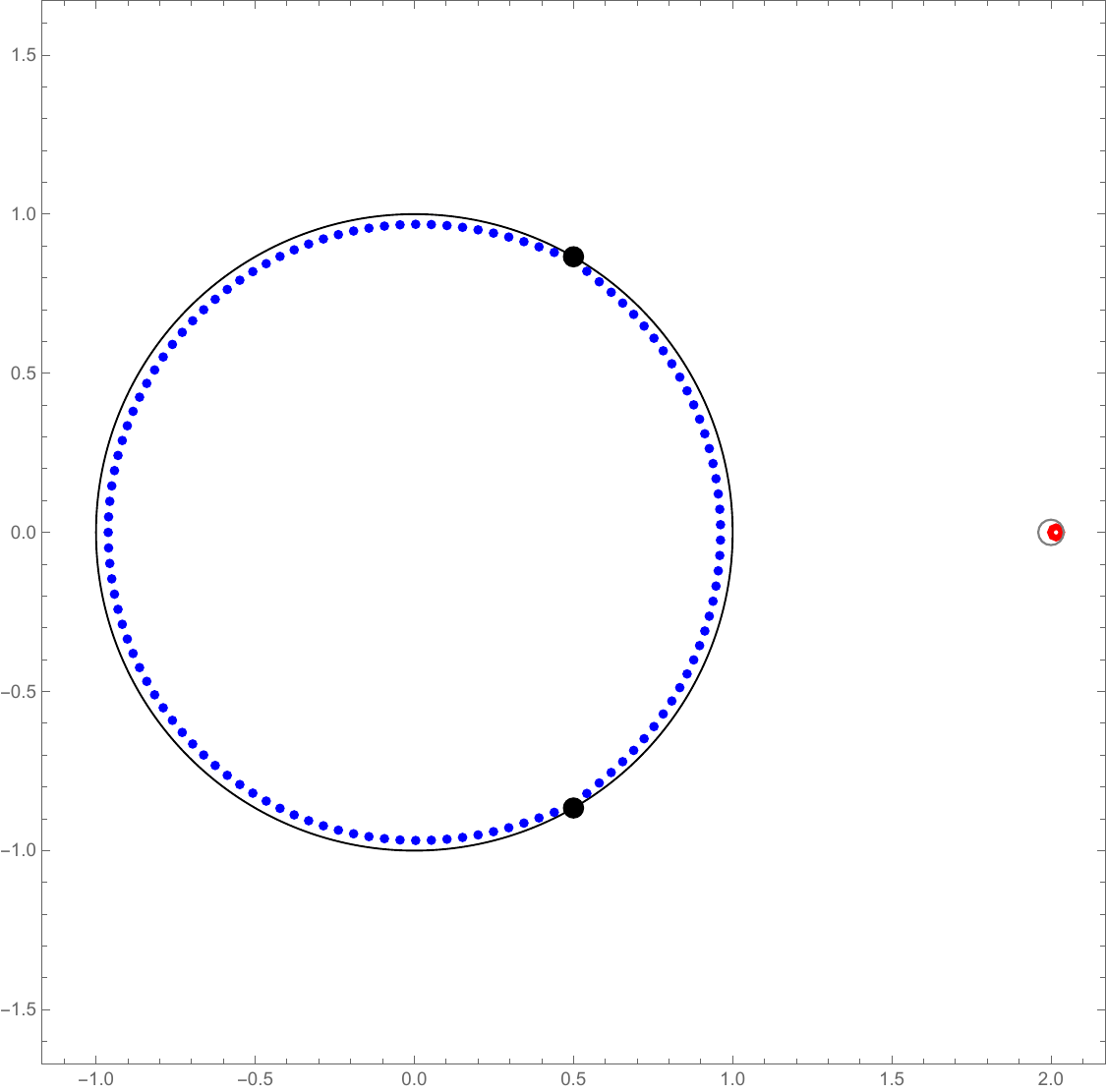}
\includegraphics[width=6cm]{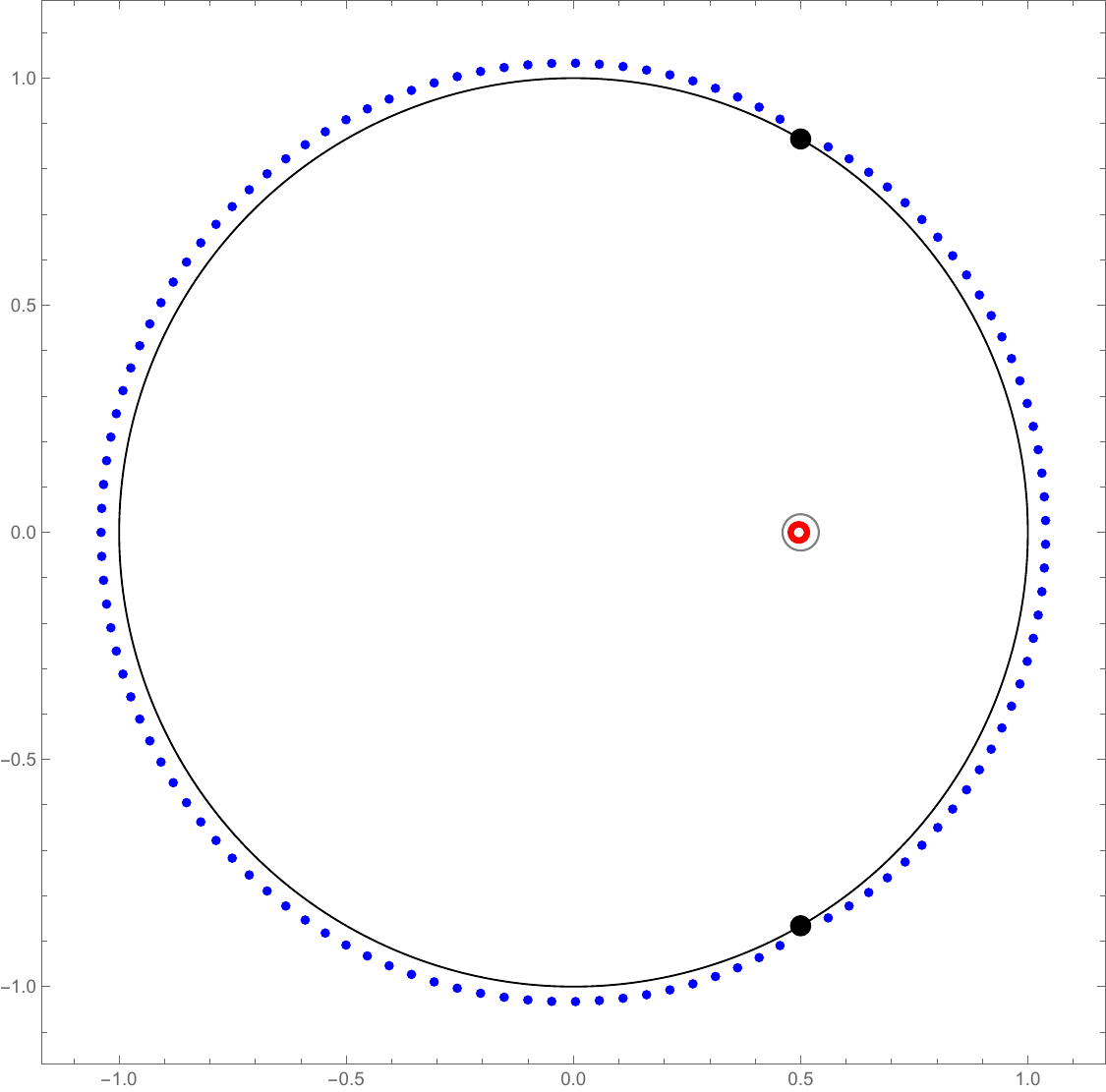}
\includegraphics[width=6cm]{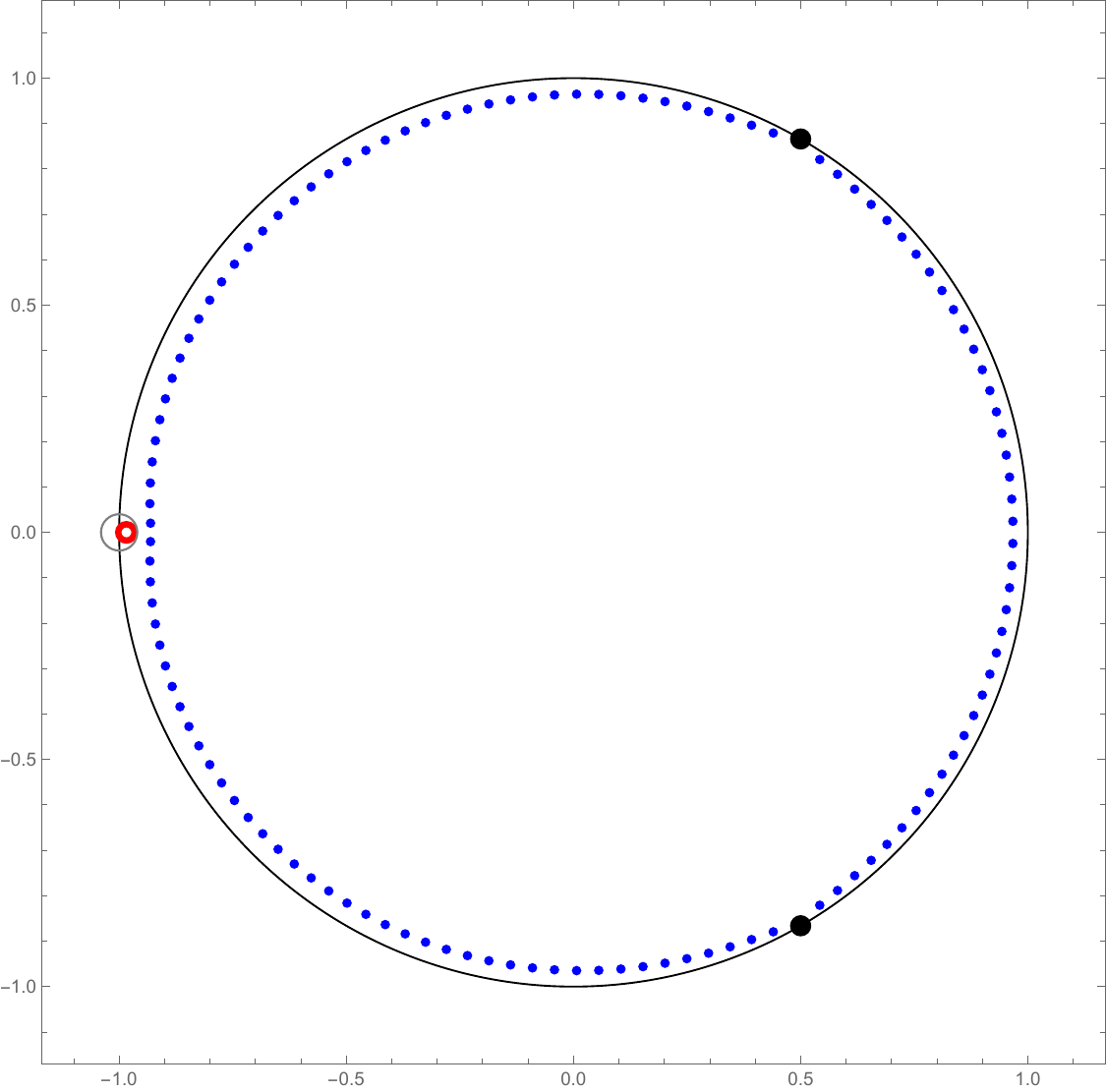}
\includegraphics[width=6cm]{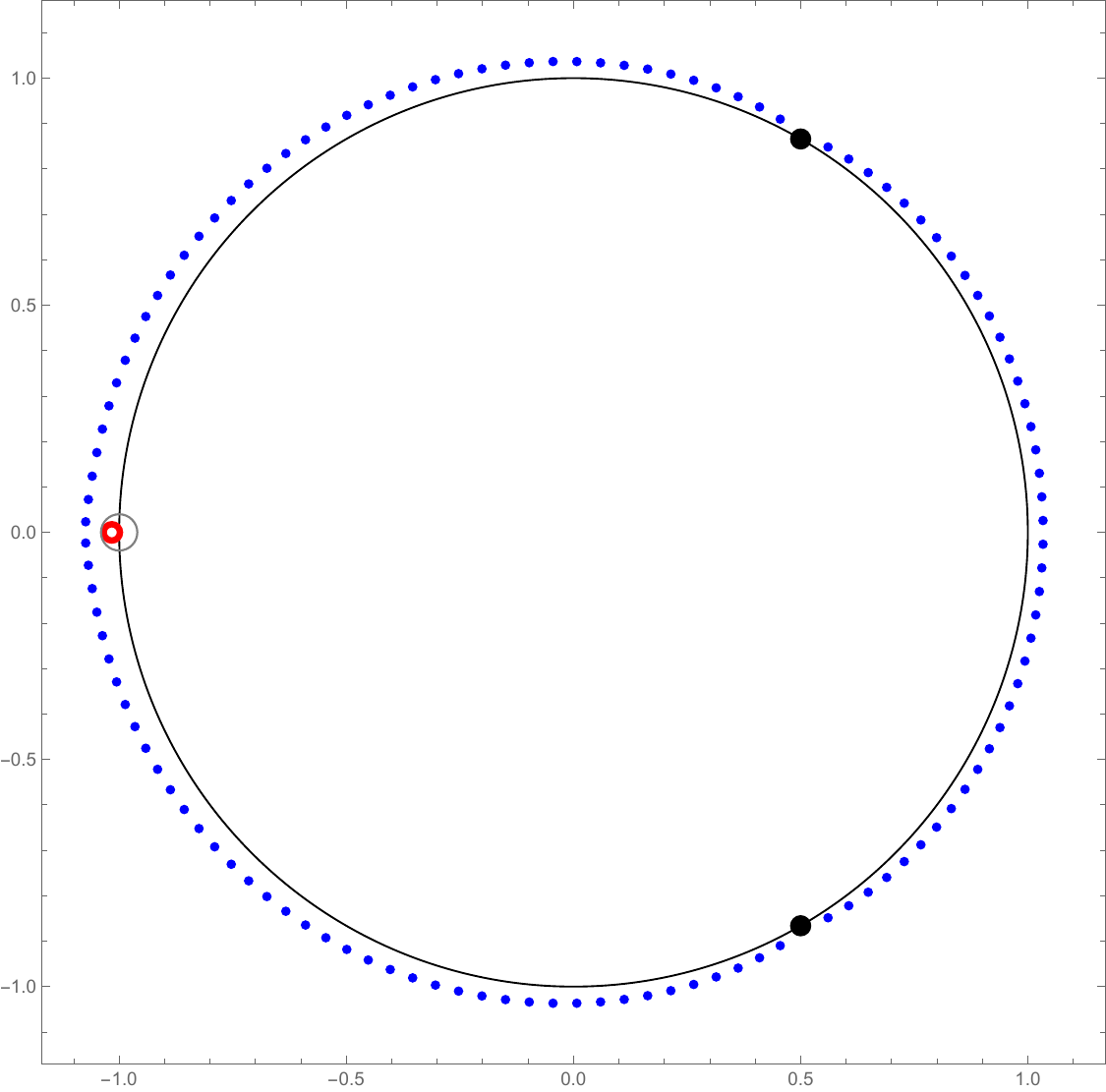}
\caption{\label{Fig2conjugate} Roots (in blue) of $\varphi_n$ (left) and $p_n$ (right) for degrees $n=120\equiv 0$ mod $3$ (first row), $n=121\equiv 1$ mod $3$  (second) and $n=122\equiv 2$ mod $3$ (last) associated to $w(z)= |z-\exp(i\pi/3)|^2\,|z-\exp(-i\pi/3)|^2$ or, equivalently, to $f(z)=(z-\exp(i\pi/3))\,(z-\exp(-i\pi/3))$. The spurious zero (left side) and its reverse conjugate (right side) correspond with the red circle, while the gray and thinner circle is centered at the limit of these spurious roots. Observe the effect of these spurious zeros (ghost charges) when it is inside the circle, attracting one root of $\varphi_n$, and deforming in turn the contour where the zeros of $\varphi_n$ are located.}
\end{figure}

\subsection{A closed formula for many OPA}
Now we turn to the proof of Theorem \ref{closedopa}.

\begin{proof}
First of all, $p_n$ is already known to be a polynomial (of degree at most $n$) and thus $p_n f f^*$ is already assumed to be a polynomial of degree at most $n+2d$. What remains is only to show that the coefficients of $p_n f f^*$ of degrees between $d+1$ and $n+d$ are 0 and that of degree $d$ is equal to 1.

Now, we are going to revisit the main Theorem in \cite{BCLSS2015} but use the form from Theorem 2.1 in \cite{FMS2014}. There it is shown that the coefficients $c= (c_k)_{k=0}^n$ of the OPA to $1/f$ in $H^2$ of degree $n$ are given as the only solution to an invertible $(n+1) \times (n+1)$ linear system of the form $Mc=b$ where
\[b^T=(\overline{f(0)},0,...,0).\] In our case $f(0)=1$ and so, $b=e_0$ (the first element of the canonical basis), whereas $M$ is a matrix with entries $M_{j,k}= \left\langle z^k f, z^j f\right\rangle$. Again, focusing on our case, the shift in $H^2$ is an isometry, thus $M_{j,k}=M_{j+1,k+1}$ (whenever that is well defined) and this means that the matrix $M$ happens to be a Toeplitz matrix (i.e., each diagonal takes a constant value).

The equations in said linear system are of the form \[\sum_{k=0}^n M_{j,k} c_k = 0\] for $j=1,...,n$ and \[\sum_{k=0}^n M_{0,k} c_k = 1.\]

What remains to be mentioned is that $M_{j,k}$ coincides with the coefficient of order $d+j-k$ of $ff^*$. To see this, since the matrix is hermitian and $(ff^*)*$ is a polynomial with coefficients conjugate to those of $ff^*$, we only need to check the case $k \geq j$. By the Toeplitz property, we only need to check that the coefficient of $ff^*$ of order $d-t$, $C_{d-t}$ ($t=0,...,n$) coincides with $M_{0,t}$. Denote $f(z) = \sum_{j=0}^d a_j z^j$ (and also denote $a_j = 0$ if $d<j \leq n$). Then \[M_{0,k}= \left \langle z^k f, f\right \rangle = \sum_{t=k}^n a_{t-k} \overline{a_t}.\]
Notice that from the definition of $f^*$ as the reversed polynomial of $f$, we have \[C_{d-t}= \sum_{s=0}^{d-t} a_s \overline{a_{d-(d-t-s)}}=\sum_{s=0}^{d} a_s \overline{a_{s+t}}.\]
Notice all the values that are null in each of the two results and this implies they are the same, completing  the proof.
\end{proof}

\section{Spurious zeros of OPA and OPUC.}

In the examples displayed in Subsection 4.1, the possible appearance of spurious zeros of the OPUC $\varphi_n$ (and of the OPA $p_n$), which means the existence of ghost charges, was shown, coinciding with what was previously shown in \cite{MMS2006}. Indeed, we saw that though the vast majority of zeros of OPUC (OPA) cluster on $\T$ from inside (respect., from outside), it is possible the existence of a spurious zero of the OPUC inside the open Unit Disk (respect., of the OPA outside the closed Unit Disk). Moreover, it was also shown that the possible appearance of spurious zeros is related to the congruences $n\equiv r, \,\text{mod}\,l $.
In such cases, this spurious zero comes together the unique zero of the electrostatic partner and, vice versa, if the zero of the electrostatic partner, $s_n$, lies inside the open Unit Disk, it attracts a spurious zero of the OPUC (respect., its reversed point outside attracts a spurious zero of the OPA).

The notion of spurious poles (or zeros) was probably coined by G. Baker in 1960's (see \cite{Stahl}) in the context of the Pad\'e Approximants (PA, that is, Taylor rational interpolants to a function), and were systematically studied by H. Stahl \cite{Stahl}. He uses a Cauchy integral representation of the function on a contour $C$ of the complex plane, and proves that just a few of the poles (zeros of a non-hermitian orthogonal polynomial in $C$)  may cluster far from the integration contour; these are the spurious poles (observe, in turn, the asymptotic nature of this concept). Under quite general conditions on $f$, each of these spurious poles is accompanied by a spurious zero (i.e., a zero of the numerator), in such a way that they asymptotically cancel and do not break the convergence far from this small zone; however, the presence of these pairs of spurious poles/zeros makes the uniform convergence in compact subsets of the domain of holomorphy of the function impossible, so that only convergence in capacity takes place. Seemingly, these pairs of spurious poles/zeros were observed quite some time ago by physicists, and were known in the scientific literature as \textit{Froissart doublettes}.
In our case, it seems to be a similar relationship (doublettes) between the zeros of our electrostatic partner $S$ and the spurious zeros of the OPUC $\varphi_n$ (and, after reversing them, with those of the OPA).

That is, as far as the numerical evidence shows, spurious zeros of $\varphi_n$ ($p_n$) and zeros of the electrostatic partner lying inside the Unit Disk (respect., reversed of zeros of the electrostatic partner lying outisde the closed disk) come together for our case study.
In this sense, we propose the following conjecture linking spurious roots of $\varphi_n$ with roots of the electrostatic partner.
\begin{conjecture} Let $\{\xi_{n,k}\}$ be the set of spurious roots of $\varphi_n$ and $\{s_{n,\ell}\}$ the set of the roots of the electrostatic partner $S_n$ which are inside the disk and do not accumulate on $\T$. Then both sets has the same amount of elements and we can reorganize all of them in pairs $(\xi_{n,k},s_{n,k})$ in such a way that
$$\xi_{n,k}=s_{n,k}-\frac{s_{n,k}}{n}+o(1/n)\,,$$
for any of these pairs.
\end{conjecture}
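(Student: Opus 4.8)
The plan is to leverage the explicit machinery already developed in the paper, especially the closed formula for $\varphi_n$ in \eqref{ExprOPUC} and the computation of the electrostatic partner $S_n$ via \eqref{ComputS} together with Corollary \ref{CorFun2kind}. The strategy is fundamentally a matching argument: I would establish that a spurious zero $\xi_{n,k}$ of $\varphi_n$ and a nearby root $s_{n,k}$ of $S_n$ satisfy closely related algebraic equations, and then extract the leading asymptotic relation $\xi_{n,k}=s_{n,k}(1-1/n)+o(1/n)$ from comparing these. The key structural observation is that both $\varphi_n$ and its second-kind function $\widehat{\varphi_n}$ admit explicit expressions in terms of the same coefficients $u_j$ (equivalently the $t_i$ in the two-zero case); since $S_n$ is built bilinearly from $\varphi_n$ and $\widehat{\varphi_n}$, a zero of $S_n$ inside the disk that does \emph{not} cluster on $\T$ must correspond to a cancellation that forces $\varphi_n$ to nearly vanish there as well.

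First I would make precise the definition of ``spurious'' in asymptotic terms, following Stahl's framework alluded to in the text: a root is spurious if it stays inside $\D$ bounded away from $\T$ as $n\to\infty$ (along a fixed congruence class $n\equiv n'$ mod $2l$, to keep the coefficients $t_{i,0},t_{i,1}$ stable as in Proposition \ref{PropAstys}). Second, I would analyze the location of the interior root $s_n$ of $S_n$ using the explicit formula $S_n(z)=z+\frac{(n+4)t_0}{(n+3)t_1}+\frac{(n+2)t_2}{(n+3)t_3}+2\cos\theta$ derived in the proof of Proposition \ref{PropAstys}, and its natural generalization for $m>1$ via \eqref{ComputS}; this pins down $s_{n,k}$ in terms of ratios of the $u_j$. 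Third, and this is the heart, I would show that at such an interior point the numerator polynomial appearing in \eqref{ExprOPUC} is nearly zero. The idea is that $\widehat{\varphi_n}(z)=-2\pi i\sum_{j=1}^{d}\overline{u_{n+d+j}}z^{-n-j}$ decays like $z^{-n}$ for $|z|>1$ but, continued inside the disk, the relevant combination in \eqref{ComputS} can only vanish (away from $\T$) if $\varphi_n$ itself has a root there up to an $O(1/n)$ correction coming from the $B_n\varphi_n\widehat{\varphi_n}$ term, which carries the explicit factor $-(n+\alpha)/z$ from $A,B_n$ in \eqref{WnABn}.

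The precise $1-1/n$ factor I expect to come from this very term: writing the defining relation for $S_n$ as a perturbation of ``$\varphi_n=0$'', the linear differential equation \eqref{SODE} evaluated near an interior root, combined with $\frac{B_n}{A}\sim -\frac{n}{z}$ from \eqref{WnABn}, produces a correction of relative size $1/n$ with the sign and magnitude that yield exactly $\xi_{n,k}\approx s_{n,k}(1-1/n)$; this is already confirmed in the $d=2$ case by the explicit asymptotics of $s_n$ versus the roots of the numerator in \eqref{expr}, which I would use as the base case and sanity check. The main obstacle will be the third step in full generality ($m>1$, with possibly several interior roots of $S_n$): one must rule out the degenerate situation where two spurious roots collide or where an interior root of $S_n$ approaches $\T$ and thereby fails to pair with a genuinely spurious zero, and one must control the pairing bijectively rather than just count. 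Establishing that the correspondence is a genuine bijection with uniform $o(1/n)$ error, rather than merely an equality of cardinalities for generic configurations, is where a careful Rouché-type argument on small circles around each $s_{n,k}$ would be needed, and verifying the requisite non-degeneracy (that the leading coefficients $t_{i,0},t_{i,1}$ do not conspire to push an interior root onto the circle) is the delicate part that the current numerical evidence supports but a full proof must pin down; this is precisely why the statement is framed as a conjecture.
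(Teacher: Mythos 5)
You should first be aware that this statement is a conjecture: the paper does not prove it, and offers only a partial heuristic in one direction, quite different from your plan. The paper's argument starts from a spurious zero $\xi_{n,1}=z_{n,1}$ of $\varphi_n$ and uses the equilibrium identity \eqref{equilJgralSpur}: the sum over the remaining zeros is $o(n)$ (they equidistribute on $\T$, and the Cauchy transform of the uniform measure vanishes inside $\D$), the repellent terms at the $a_j$ are $O(1)$, and the attractor at the origin contributes $-n/(2z_{n,1})+O(1)$; the only term left to balance an $O(n)$ quantity is the ghost-charge sum, which forces a root $s_{n,1}$ of $S_n$ to within $O(1/n)$ of $\xi_{n,1}$, and then the one-line balance $\frac{-n/2}{\xi_{n,1}}-\frac{1/2}{\xi_{n,1}-s_{n,1}}=o(n)$ yields exactly $\xi_{n,1}=s_{n,1}-s_{n,1}/n+o(1/n)$. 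Your plan instead works from the closed formulas \eqref{ExprOPUC}, \eqref{ComputS} and Corollary \ref{CorFun2kind}, and aims primarily at the \emph{converse} implication (interior root of $S_n$ $\Rightarrow$ nearby spurious zero of $\varphi_n$) plus a Rouch\'e-type bijection; if completed, this would actually deliver more than the paper's heuristic, which establishes neither the converse direction nor the equality of cardinalities.

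However, your central step three is a genuine gap, asserted rather than argued. From \eqref{ComputS}, $S_n$ is a bilinear combination of $\varphi_n$, $\varphi_n'$, $\widehat{\varphi_n}$ and $(\widehat{\varphi_n})'$, and at an interior point the expansion of Corollary \ref{CorFun2kind} is not available: it is derived for $|z|>1$, the Cauchy transform jumps across $\T$, and the polynomial identity \eqref{ComputS} holds because negative powers cancel, not because that series represents $\widehat{\varphi_n}$ inside $\D$. So $S_n(z)=0$ at an interior point could a priori occur through cancellation among the four terms while $\varphi_n(z)$ stays of moderate size; you would need a quantitative lower bound excluding this, and your sketch supplies none. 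Your proposed derivation of the precise factor $1-1/n$ from \eqref{SODE} together with $B_n/A\sim -n/z$ is likewise not carried out, and would in effect have to reproduce the paper's balancing computation anyway. Finally, your base-case-plus-generalization scheme leans on Proposition \ref{PropAstys}, which only covers $d=2$ with $\theta\in\pi\Q$ (periodicity of the coefficients along congruence classes is essential there); for irrational multiples of $\pi$, or for general weights \eqref{GJOPUC} with $m>2$ singularities, the needed asymptotics of the $u_j$ are simply not established in the paper, so the matching argument has no input. You candidly flag several of these obstacles yourself, which is appropriate: the missing non-degeneracy and the interior lower bound on the bilinear combination are exactly why the statement remains a conjecture, but as written your proposal does not contain the paper's short equilibrium argument nor a substitute for it.
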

A part of this conjecture can be justified using the electrostatic interpretation. Suppose that $\{z_{n,k}\}_{k=1}^n$ are the roots of $\varphi_n$ and that $\xi_{n,1}$ is indeed $z_{n,1}$. Suppose also that $\xi_{n,1}$ does not converges to $0$. Denote by $\{s_{n,k}\}_{k=1}^{m-1}$ the roots of $S_n$. Then \eqref{equilJgral} gives
 \begin{equation}\label{equilJgralSpur}
\sum_{j=2}^{n}\,\frac{1}{z_{n,1}-z_{n,j}}\,+\,\frac{(-n-\alpha +1)/2}{z_{n,1}}\,+\,\sum_{j=1}^m\,\frac{\alpha_j +1/2}{z_{n,1} - a_j}\,-\sum_{j=1}^{m-1}\,\frac{1/2}{z_{n,1}-s_{n,j}} = 0\,.
\end{equation}
If the rest of the roots $z_{n,2},\dots,z_{n,n}$ do not get close to $z_{n,1}$ as $n\to\infty$, then the first addend in \eqref{equilJgralSpur} is $o(n)$; to see it take into account that though at a first view it would be $O(n)$, the limit distribution of these roots is the uniform distribution on $\T$, whose logarithmic potential vanishes in the interior of the disk $\{z:|z|<1\}$, and so it does the Cauchy transform, which is the limit of this first addend). The second addend is $-n/(2z_{n,1})+O(1)$, and the third addend is $O(1)$. Hence, the fourth term must be $n/z_1+o(n)$, but the only possible way for this to happen is that a root of $S_n$ gets close to $\xi_{n,1}$; let $s_{n,1}$ be this root. Then, \eqref{equilJgralSpur} simplifies
$$\frac{-n/2}{z_{n,1}}-\frac{1/2}{z_{n,1}-s_{n,1}}=o(n)\,,$$
which would provide the relationship asserted in the conjecture.

Figure \ref{FigEspureos} contains information about the roots the OPUC associated to the weight $w(t)=|t-e^{i\pi/3}|^2|t-e^{i2\pi/3}|^2 |t-e^{i-\pi/3}|^2$, with $n\equiv 2$ mod 6. Indeed, we have focused on the the information about spurious zeros. These polynomials present at most two spurious zeros according to \cite{MMS2006}. Indeed, when $n\equiv 2$ mod 6 polynomials $\varphi_n$ present exactly two spurious zeros, which converge quickly to $-0.0227405 + 0.796708 i$ and $0.356074 - 0.219358 i$, approximately (for others $n$'s we have 1 or none spurious roots). There, we can see the relationship between spurious roots and the roots of the electrostatic partner. Moreover, not only both types of roots seem to converge to the same points, but also the terms
$$n\left(\xi_{n,j}+\frac{\xi_{n,j}}{n}-s_{n,j}\right)\,,$$
seem to be $o(1)$, which agrees with our conjecture. In this sense, take into account that it is much easier to predict the appearance of roots of the electrostatic partner inside the circle from \eqref{ComputS} than that of spurious zeros of $\varphi_n$ (or $p_n$).

\begin{figure}
\includegraphics[width=6cm]{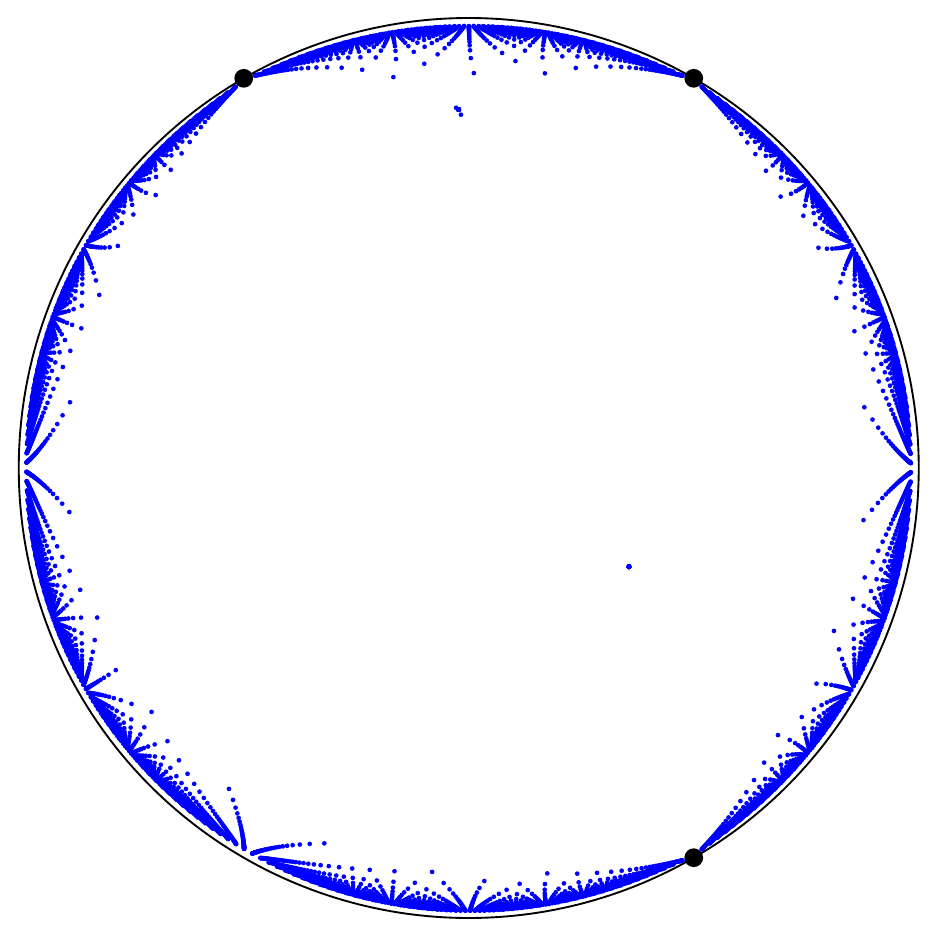}
\includegraphics[width=6cm]{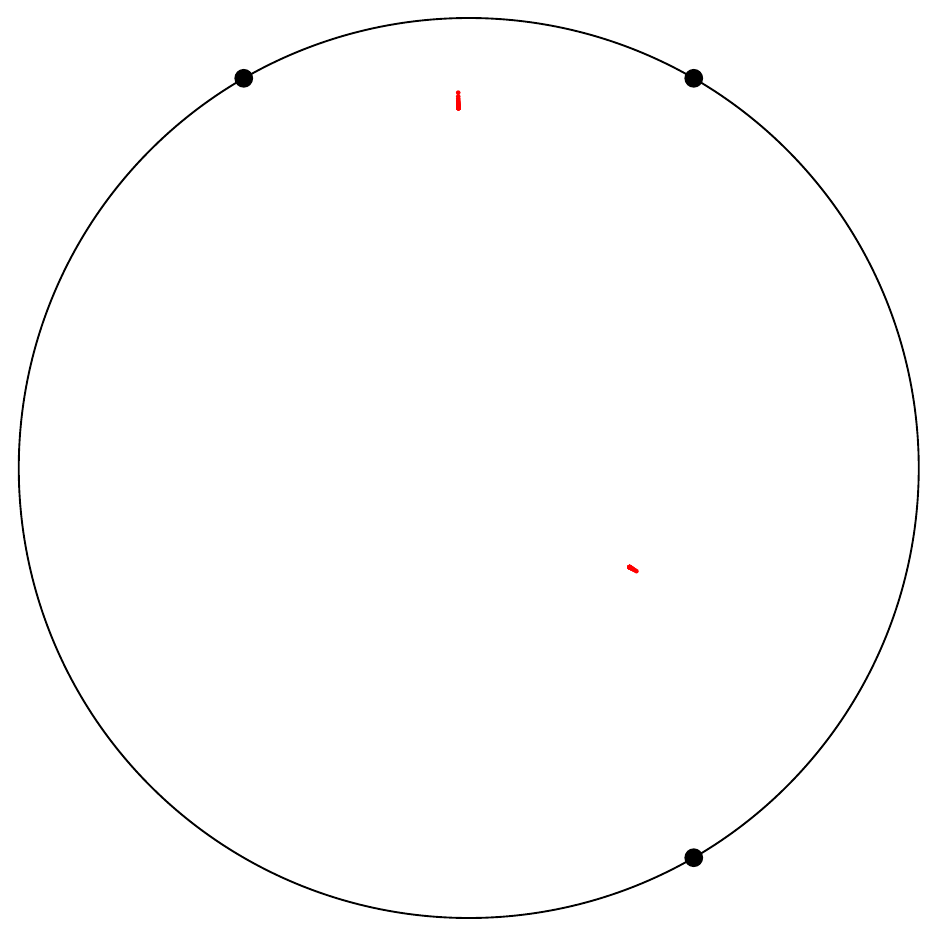}
\includegraphics[width=6cm]{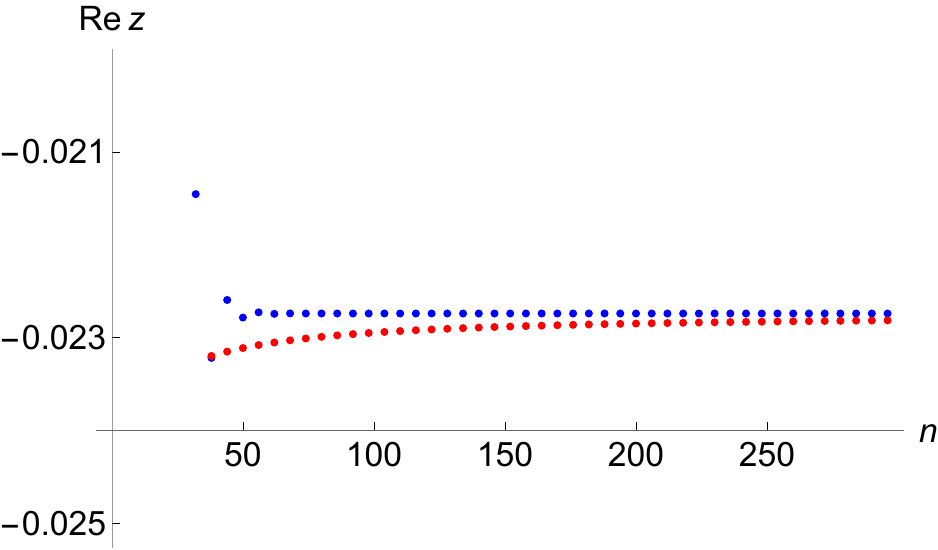}\hspace{10mm}
\includegraphics[width=6cm]{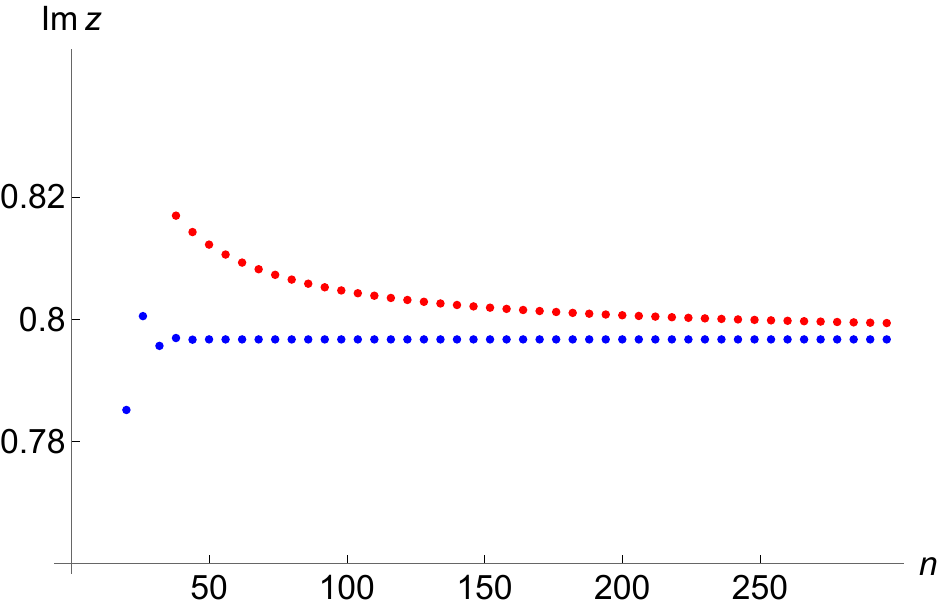}
\includegraphics[width=6cm]{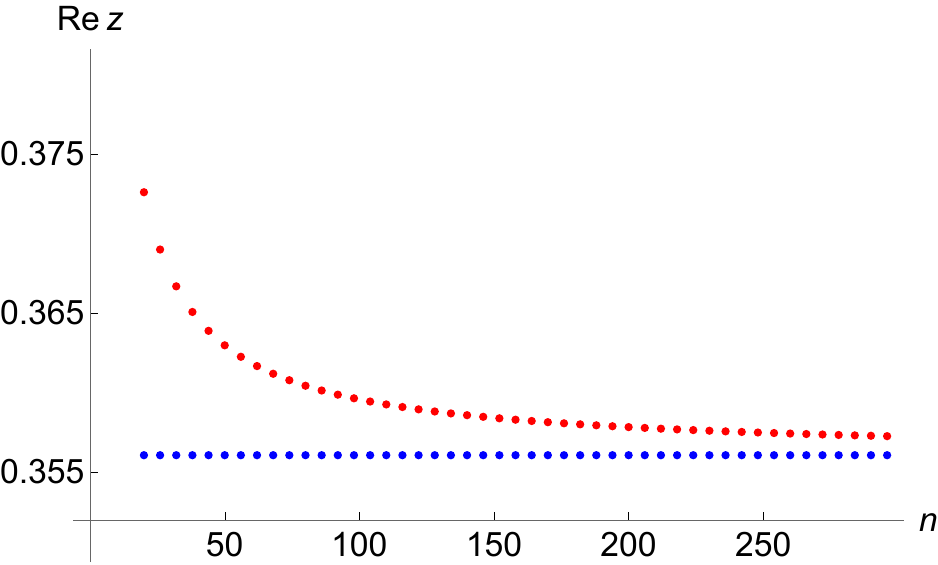}\hspace{10mm}
\includegraphics[width=6cm]{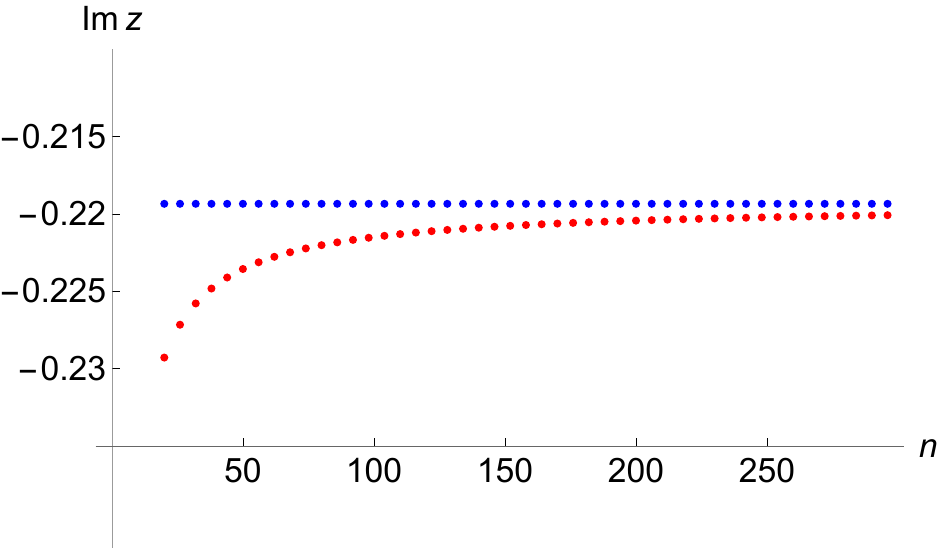}
\includegraphics[width=6cm]{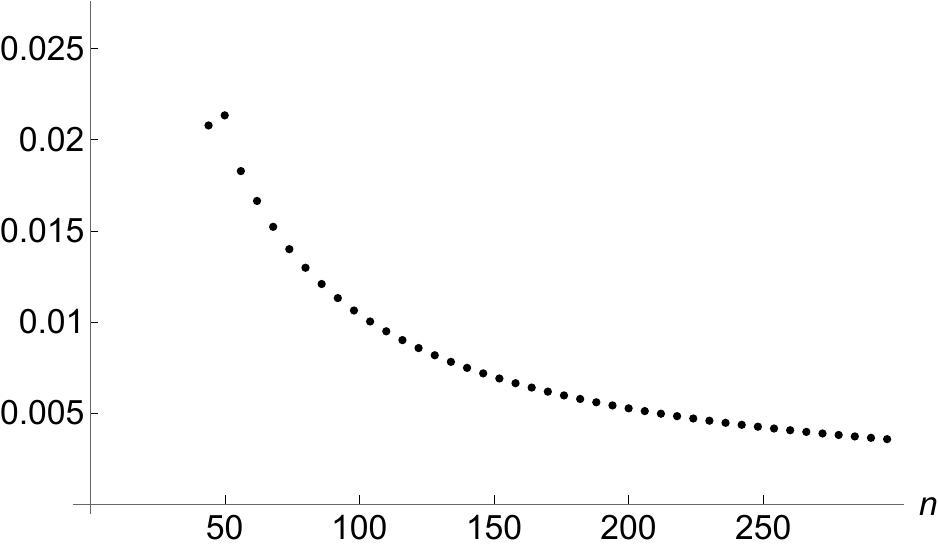}\hspace{10mm}
\includegraphics[width=6cm]{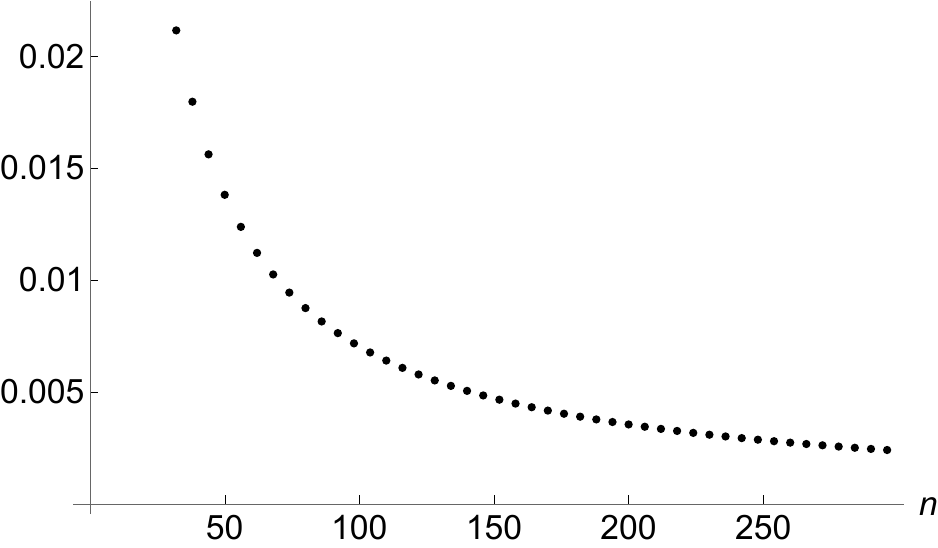}
\caption{\label{FigEspureos} Zeros of OPUC and electrostatic partners for ($w(t)=|t-e^{i\pi/3}|^2|t-e^{i2\pi/3}|^2 |t-e^{i-\pi/3}|^2$, $n\equiv 2$ mod 6) are shown. First row left (resp. right): Roots of $\varphi_n$ for $n\in\{20,22,...,98\}$ (resp. $S_n$). Second left (resp. right): real (resp. imaginary) part of spurious zero (blue) of $\varphi_n$ near  $-0.0227405 + 0.796708 i$, and zero $s_n$ of the associated $S_n$ (red). Third left (resp. right): real (resp. imaginary) part of spurious zero (blue) of $\varphi_n$ near  $0.356074 - 0.219358 i$, and the root of $S_n$ (red). Fourth left (resp. right): $n(\xi_n-\xi_n/n-s_n)$ for $\xi_n$ spurious zero near $-0.0227405 + 0.796708 i$ (resp. $0.356074 - 0.219358 i$) and $s_n$, the zero of the associated electrostatic partner $S_n$.}
\end{figure}

\section{Further problems}\label{sect4}

It would be unnatural to leave at this point without mentioning that this is not the end of the road, and that one should aim at finding more general classes of functions for which a similar approach can be developed. Here we have studied the role in the repulsion or the attraction of the roots of the OPAs, and the related OPUC, played by the zeros of $f$ regarding their position and their multiplicity. One could aim at studying how the product of two different functions $f_1$ and $f_2$ may lead to a similar problem; how to extend the consequences to other spaces like Dirichlet or Bergman; and how to pass from a sequence of functions $\{f_n\}$ for which the behaviour of the corresponding OPA is sufficiently well understood, to infer properties of opas of the limit of said sequence in the spirit of \cite{AgSe}.

Next, we will list some possible problems, or further developments, that we can face now or in the next future.

\begin{itemize}
\item As shown, the numerical evidence and Conjecture 1 above suggest that for the generalized Jacobi type functions $f$ and related weights $W_n$, the zeros of the electrostatic partner which lie inside the Unit Disk and the spurious zeros of the corresponding OPUC $\varphi_n$ are clearly related. We plan to go deep in this connection in forthcoming papers.
\item Even if it is not directly related with the study of cyclicity, it seems reasonable to study a case where $f$ has varying multiplicities, for instance, $f_n(z) = (1-z)^{a_n}$, with, say, $\lim_{n\rightarrow\infty}\,\frac{a_n}{n} = C>0$.
Understanding the asymptotic behavior of the sequence of $p_n$, where for each $n$, $p_n$ is the nth OPA to $1/f_n$ may actually help understand the individual role played by each 0 of $f$.
\item For Bergman-type spaces, one should deal with orthogonality not just on $\T$, but on the whole $\D$, using $dA(z)$, that is, with the so-called Bergman orthogonal polynomials. For Dirichlet, an additional derivative may ruin the nature of the differential equation regarding the OPA. The connection in these spaces with the orthogonal polynomials still holds via the reproducing formula but this does not imply $p_n$ and $\varphi_n$ are reversed from each other anymore. Also, the property used in the last Theorem, that the matrix is Toeplitz is only true in $H^2$.
\item If one is successful with the basic problems for polynomial $f$ with a few zeros, in the Dirichlet space, then a natural direction to follow is that of constructing $f_n$ with sets of zeros  $Z_n$ such that \[\cup_{n\in \N} Z_n = Z\] is an explicit set of logarithmic capacity zero. Perhaps a counterexample to the Brown-Shields conjecture \cite{BrSh} consists of choosing adequately the multiplicities of each zero.
\end{itemize}

\end{document}